\documentclass[10pt]{article}

\usepackage[
	a4paper,
	margin=1in
]{geometry}
\usepackage{setspace}
\usepackage{sectsty} 
\usepackage{titlesec}
\usepackage{appendix}

\usepackage{amsmath, amssymb, amsthm}
\usepackage{mathtools}

\usepackage[
	setpagesize=false,
	colorlinks=true,
	linkcolor=BrickRed,
        citecolor=OliveGreen,
        urlcolor=black,
	pdfencoding=auto,
	psdextra,
]{hyperref}
\usepackage{cleveref}

\usepackage{etoolbox} 
\usepackage[shortlabels]{enumitem}
\usepackage{xparse} 

\usepackage{graphicx}
\usepackage[dvipsnames]{xcolor}
\usepackage{tikz}

\usepackage{todonotes}
\usepackage[
    deletedmarkup=sout,
    commentmarkup=uwave,
    authormarkuptext=name
]{changes}

\usepackage{comment}

\setlist[enumerate,1]{label=(\arabic*), ref=(\arabic*)}
\setlist[enumerate,3]{label=(\roman*), ref=(\roman*)}

\allsectionsfont{\boldmath} 

\titlelabel{\thetitle.\quad}

\theoremstyle{plain}
\newtheorem{theorem}{Theorem}[section]
\newtheorem{lemma}[theorem]{Lemma}

\newtheorem{proposition}[theorem]{Proposition}
\newtheorem{observation}[theorem]{Observation}
\newtheorem{conjecture}[theorem]{Conjecture}

\newtheorem{question}[theorem]{Question}
\newtheorem{fact}[theorem]{Fact}

\newtheorem{claim}[theorem]{Claim}
\newtheorem*{claim*}{Claim}

\makeatletter
\newenvironment{claimproof}[1][Proof]{\par
	\pushQED{\qed}%
	
	\normalfont \topsep6\p@\@plus6\p@\relax
	\trivlist
	\item[\hskip\labelsep
	\textit{#1}\@addpunct{.}~]\ignorespaces
}{%
	\popQED\endtrivlist\@endpefalse
}
\makeatother

\theoremstyle{definition}
\newtheorem{definition}[theorem]{Definition}
\newtheorem*{definition*}{Definition}

\newcommand{\ve}{\varepsilon}

\newcommand{\Hom}{\mathrm{Hom}}

\newcommand{\knrs}{\mathrm{KNRS}}

\newcommand{\defeq}{\coloneqq}

\usetikzlibrary{matrix,arrows,decorations.pathmorphing}
\usetikzlibrary{positioning}
\usetikzlibrary{graphs} 
\usetikzlibrary{graphs.standard}

\newcounter{propcounter}
\stepcounter{propcounter}

\title{Counting cographs in locally dense graphs}

\author{Hyunwoo Lee\thanks{{Department of Mathematical Sciences, KAIST, South Korea and Extremal Combinatorics and Probability Group (ECOPRO), Institute for Basic Science (IBS).
        E-mail: {\ttfamily hyunwoo.lee@kaist.ac.kr.} Supported by the National Research Foundation of Korea (NRF) grant funded by the Korea government (MSIT) No. RS-2023-00210430, and the Institute for Basic Science (IBS-R029-C4).}}
}

\usepackage[square,sort,comma,numbers]{natbib}
\begin{document}
\maketitle

\begin{abstract}
    An $n$-vertex graph $G$ is $(\rho, d)$-dense if for all $U \subseteq V(G)$ of size at least $\rho n$, the induced subgraph $G[U]$ contains at least $\frac{d}{2}|U|^2$ edges. In 2010, partly motivated by the theory of quasirandomness, Kohayakawa, Nagle, R\"{o}dl, and Schacht conjectured that, for any fixed graph $F$, the number of homomorphic copies of $F$ in a given $n$-vertex $(o(1),d)$-dense graph is at least $(1-o(1))d^{e(F)}n^{v(F)}$. Despite considerable effort, only a few graphs are known to satisfy the conjecture. We find a new broad class of graphs that satisfy the conjecture, including cographs.
    
\end{abstract}


\section{Introduction}\label{sec:intro}

Counting copies of a fixed graph $F$ in a host graph $G$ is one of the
central problems in extremal combinatorics.  Many classical results can be
interpreted as saying that suitable density assumptions on $G$ force many
copies of $F$.  A convenient language for such questions is that of
homomorphism densities.  For graphs $F$ and $G$, let
\[
    q(F,G) := \frac{|\Hom(F,G)|}{|V(G)|^{|V(F)|}},
\]
which denotes the probability that a uniformly random map $V(F)\to V(G)$ is a graph homomorphism. We also write $t(F,G) \defeq |\Hom(F,G)|$ for the homomorphism count.

In the Erd\H{o}s--R\'enyi random graph $G(n,p)$, one expects
$q(F,G(n,p)) = p^{e(F)}+o(1)$ with high probability for every fixed graph
$F$.  A classical theorem of Chung, Graham, and Wilson~\cite{Chung-Graham-Wilson}
shows that this random-like counting behaviour is equivalent to uniform
control of edge densities on vertex subsets. In one of its standard forms,
their theorem states that if an $n$-vertex graph $G$ has edge density $p$ and
\[
    e_G(U) = \frac{p}{2}|U|^2 \pm o(n^2)
\]
for every $U\subseteq V(G)$, then
\[
    q(F,G) = p^{e(F)}+o(1)
\]
for every fixed graph $F$.  In other words, a two-sided quasirandom control on all vertex subsets forces the same asymptotic homomorphism densities as in the
random graph.

The Kohayakawa--Nagle--R\"odl--Schacht conjecture asks what remains true
when this quasirandomness assumption is weakened in two ways: one only keeps
the lower bound, and one only imposes it on sufficiently large vertex
subsets. For $0<\rho,d<1$, an $n$-vertex graph $G$ is called
$(\rho,d)$-dense if every set $X\subseteq V(G)$ with $|X|\ge \rho n$ spans at
least $\frac{d}{2}|X|^2$ edges.
\begin{conjecture}[Kohayakawa, Nagle, R\"{o}dl, and Schacht~\cite{KNRS}]\label{conj:KNRS}
    For every graph $F$ and $d, \ve > 0$, there exists a positive real number $\rho$ such that the following inequality 
    \begin{equation}
        q(F, G) \geq d^{e(F)} - \ve\label{eq:hom-ineuqality}
    \end{equation}
    holds for all $(\rho, d)$-dense graphs $G$.
\end{conjecture}

We say that a graph $F$ is \emph{KNRS} if, for every $0<d<1$ and every
$\varepsilon>0$, there exists $\rho>0$ such that every sufficiently large
$(\rho,d)$-dense graph $G$ satisfies \eqref{eq:hom-ineuqality}.
The conjecture of Kohayakawa, Nagle, R\"odl, and Schacht then asserts that every
graph is KNRS.

The $(\rho,d)$-denseness condition is also a useful tool in counting
arguments in Ramsey theory.  In their work on Ramsey numbers of sparse graphs, Graham, R\"{o}dl, and Ruci\'{n}ski~\cite{Graham-Rodl-Rucinski} used a dichotomy in which one colour is dense on a suitable subset, or the other colour is dense on all sufficiently large subsets.  Thus embedding problems in locally dense graphs arise naturally in Ramsey-theoretic arguments, making the KNRS conjecture relevant beyond quasirandom counting alone.

Despite its importance, only rather few graphs are known to satisfy the $\knrs$ conjecture. Known examples include complete multipartite
graphs~\cite{KNRS}, bipartite graphs satisfying Sidorenko's
conjecture~\cite{Sidorenko1,Sidorenko2}, odd cycles~\cite{Reiher}, unicyclic graphs~\cite{Joonkyung}, cycles with a chord~\cite{Joonkyung}, and graphs obtained from certain tree-like gluing operations~\cite{Joonkyung}.  Recent progress includes further gluing results of Brada\v{c}, Sudakov, and Wigderson~\cite{Bradac-Sudakov-Wigderson}, as well as subdivision results of Chen, Lin, and Ma~\cite{Chen-Lin-Ma}. See also~\cite{Kral-Volec-Wei}.

We add a large number of graphs to the family of known KNRS graphs by showing that cographs, i.e., induced $P_4$-free graphs, are $\knrs$.  Recall that cographs are equivalently the graphs obtained from a single vertex by repeatedly applying disjoint union and join operations (See~\cite{Graph-class-survey}). This recursive description makes them structurally rich while still highly tractable. It is also worth noting that, while sufficiently broad, the family is easily verifiable algorithmically.

\begin{theorem}\label{thm:main-cograph}
Every cograph is $\knrs$.  That is, for every cograph $F$, every $0<d<1$,
and every $\eta>0$, there exists $\rho>0$ such that every sufficiently
large $(\rho,d)$-dense graph $G$ satisfies
\[
    q(F,G) \ge (1-\eta)d^{e(F)}.
\]
\end{theorem}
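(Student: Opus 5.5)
We prove the statement by induction on $v(F)$, using the recursive description of cographs: every cograph on at least two vertices is either a disjoint union $F_1\sqcup F_2$ or a join $F_1\vee F_2$ of two smaller cographs. Write $k_i=v(F_i)$, so $k=k_1+k_2=v(F)$. Throughout, $\overline{K}_m$ denotes the edgeless graph on $m$ vertices. The base case is a single vertex, which is trivial.

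\emph{Disjoint union.} Here $q(F_1\sqcup F_2,G)=q(F_1,G)\,q(F_2,G)$. Applying the induction hypothesis to each $F_i$ with error parameter $\eta/2$ and taking $\rho$ to be the smaller of the two thresholds gives
\[
q(F,G)\ge(1-\eta/2)^2d^{e(F_1)+e(F_2)}\ge(1-\eta)d^{e(F)}.
\]

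\emph{Join.} For $F=F_1\vee F_2$ we have $e(F)=e(F_1)+e(F_2)+k_1k_2$. The plan is to peel off $F_2$ and then $F_1$, reducing to the complete bipartite graph $K_{k_1,k_2}$, and to handle $K_{k_1,k_2}$ by Sidorenko's theorem for complete bipartite graphs.

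\emph{Peeling off $F_2$.} Group homomorphisms of $F$ according to their restriction $\varphi$ to $V(F_1)$. For a fixed $\varphi$, let $W_\varphi$ be the common neighbourhood of $\varphi(V(F_1))$. Then
\[
t(F,G)=\sum_{\varphi\in\Hom(F_1,G)} t\bigl(F_2,G[W_\varphi]\bigr).
\]
Fix a small $\rho'>0$, to be chosen later. Whenever $|W_\varphi|\ge\rho' n$, the graph $G[W_\varphi]$ is $(\rho/\rho',d)$-dense. Indeed, a subset of size at least $(\rho/\rho')|W_\varphi|$ has size at least $\rho n$, so it spans at least $\frac d2|U|^2$ edges. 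Moreover $G[W_\varphi]$ is large, since it has at least $\rho' n$ vertices. So if $\rho/\rho'$ is below the threshold given by the induction hypothesis for $F_2$, we get
\[
t\bigl(F_2,G[W_\varphi]\bigr)\ge(1-\eta')d^{e(F_2)}|W_\varphi|^{k_2}.
\]
Summing over all $\varphi$, and noting that $\sum_\varphi |W_\varphi|^{k_2}=t(F_1\vee \overline{K}_{k_2},G)$, we obtain
\[
t(F,G)\ge(1-\eta')d^{e(F_2)}\Bigl(t(F_1\vee\overline{K}_{k_2},G)-(\rho')^{k_2}n^{k}\Bigr).
\]
The subtracted term accounts for the $\varphi$ with small common neighbourhood: there are at most $n^{k_1}$ maps $\varphi$, each contributing at most $(\rho' n)^{k_2}$.

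\emph{Peeling off $F_1$.} Run the same argument on $t(F_1\vee\overline{K}_{k_2},G)$, now grouping by the image $\psi$ of the independent set $\overline{K}_{k_2}$ and applying the induction hypothesis to $F_1$ inside the common neighbourhoods $W_\psi$. Since $\sum_\psi|W_\psi|^{k_1}=t(K_{k_1,k_2},G)$, this gives
\[
t(F_1\vee\overline{K}_{k_2},G)\ge(1-\eta')d^{e(F_1)}\Bigl(t(K_{k_1,k_2},G)-(\rho')^{k_1}n^{k}\Bigr).
\]

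\emph{The bipartite core.} The whole vertex set has size $n\ge\rho n$, so $G$ itself spans at least $\frac d2 n^2$ edges. Sidorenko's inequality for complete bipartite graphs, which follows from Jensen/H\"older applied to common neighbourhood counts, then gives
\[
t(K_{k_1,k_2},G)\ge d^{k_1k_2}n^{k}.
\]

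\emph{Choosing the parameters.} Combining the three bounds,
\[
q(F,G)\ge(1-\eta')^2\Bigl(d^{e(F)}-2(\rho')^{\min(k_1,k_2)}\Bigr).
\]
First choose $\eta'$ and then $\rho'$ small enough in terms of $d$, $\eta$ and $F$ that this is at least $(1-\eta)d^{e(F)}$. Then choose $\rho$ so small that $\rho/\rho'$ lies below the thresholds given by the induction hypothesis for both $F_1$ and $F_2$. Finally, take $n$ large enough that every set of size at least $\rho' n$ is large enough for those inductive statements to apply.

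\emph{Main obstacle.} The step needing care is the localization. The inductive statement must be applied inside subgraphs $G[W]$ whose sizes vary with the choice of $\varphi$ or $\psi$. This is handled by the two facts already used: $(\rho,d)$-density passes to $(\rho/\rho',d)$-density on any set of size at least $\rho' n$, and common neighbourhoods smaller than $\rho' n$ contribute negligibly. What makes the argument work is that the sums $\sum_\varphi|W_\varphi|^{k_2}$ are again homomorphism counts of joins. This lets the join be peeled one side at a time, until only $K_{k_1,k_2}$ remains and Sidorenko's inequality applies. Because of this, no stronger inductive statement beyond the one in Theorem~\ref{thm:main-cograph} is needed.
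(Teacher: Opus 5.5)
Your proof is correct, and it reaches the theorem by a more elementary route than the paper. The skeleton matches the paper's. Disjoint unions are handled via $t(F_1\sqcup F_2,G)=t(F_1,G)t(F_2,G)$. Joins are handled by peeling $F_1\vee F_2\to F_1\vee\overline{K}_{k_2}\to K_{k_1,k_2}$, which is exactly the chain $J(F_1,F_2;F_{uv})\to J(F_1,I_s;F_{uv})\to J(I_r,I_s;F_{uv})$ in the proof of Theorem~\ref{thm:main-hypergraph}, with the bipartite core handled by Jensen as in Lemma~\ref{lem:complete-bipartite-like}.

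The difference is the tool used in each peeling step. The paper obtains joins as the case $F_{uv}=K_2$ of the general gluing Theorem~\ref{thm:main-graph}. It packages copies of $F_i$ into the auxiliary weighted $v(F_i)$-graph $H_{F_i}$ and shows this hypergraph is locally dense (Lemma~\ref{lem:aux-knrs}). It then applies the hypergraph Reiher inequality (Lemma~\ref{lem:hypergraph-reiher-2}) to vertex weights given by conditional densities of the gadgets. You notice that for a single-edge gadget these weights are just indicator functions of common neighbourhoods, so the conditional count is an honest homomorphism count in $G[W]$. Because $(\rho,d)$-density is inherited as $(\rho/\rho',d)$-density by every set of size at least $\rho' n$, the inductive hypothesis applies there directly once small neighbourhoods are discarded. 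Neither hypergraphs nor a Reiher-type inequality is needed.

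What the paper's machinery buys is generality. For an arbitrary $\knrs$ gadget $F_{uv}$, and in the $k$-uniform setting, the weights are genuinely fractional and no induced subgraph captures the conditional count. There the Reiher inequality is what replaces your localization.

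One cosmetic slip: in your combined estimate the error term $(\rho')^{k_2}$ carries only one factor of $(1-\eta')$. The bound you actually obtain is $q(F,G)\ge(1-\eta')^2d^{e(F)}-2(\rho')^{\min(k_1,k_2)}$, which does not affect your choice of parameters.
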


Theorem~\ref{thm:main-cograph} follows from a more general closure property of $\knrs$ graphs. To state this result, we introduce the following construction. For later use, we state the construction for $k$-uniform hypergraphs, which we call $k$-graphs.

\begin{definition}
    Let $F_1, F_2$ be $k$-graphs and $F_{uv}$ be a $k$-graph with two labeled vertices $u$ and $v$. Define $J(F_1, F_2; F_{uv})$ as a $k$-graph which is obtained from disjoint copies of $F_1$ and $F_2$ and joining each vertex $x\in V(F_1)$ and $y\in V(F_2)$ by $F_{uv}$ (disjointly) by identifying $u$ and $x$, and $v$ and $y$. 
\end{definition}
Then we have the following theorem, which also implies Theorem~\ref{thm:main-cograph} by analyzing structural properties of cographs.

\begin{theorem}\label{thm:main-graph}
    Let $F_1$, $F_2$ be graphs that are $\knrs$ and $F_{uv}$ be a labelled $\knrs$ graph. Then $J(F_1, F_2; F_{uv})$ is $\knrs$.
\end{theorem}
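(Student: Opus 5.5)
The plan is to pass to a weighted (graphon-type) formulation and treat $F_{uv}$ as a ``generalized edge''. For a host graph $G$ on $n$ vertices and $x,y\in V(G)$, let
\[
W(x,y) \defeq \frac{|\{\varphi\in\Hom(F_{uv},G):\varphi(u)=x,\ \varphi(v)=y\}|}{n^{v(F_{uv})-2}},
\]
so that $0\le W\le 1$. Writing $V(F_1)=\{a_1,\dots,a_s\}$ and $V(F_2)=\{b_1,\dots,b_t\}$, we have
\[
q(J(F_1,F_2;F_{uv}),G)=\mathbb{E}_{\mathbf x,\mathbf y}\Bigl[\mathbf 1_{\mathbf x\in \Hom(F_1,G)}\mathbf 1_{\mathbf y\in\Hom(F_2,G)}\prod_{i,j}W(x_i,y_j)\Bigr],
\]
where $\mathbf x=(x_1,\dots,x_s)$ and $\mathbf y=(y_1,\dots,y_t)$ are uniform in $V(G)^{s}$ and $V(G)^{t}$. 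Set $D=d^{e(F_{uv})}$. The target is $q(J,G)\ge (1-\eta)\,d^{e(F_1)}d^{e(F_2)}D^{st}$.

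The first step is to upgrade the hypotheses to a \emph{vertex-weighted} form. If $G$ is $(\rho,d)$-dense and $w:V(G)\to[0,1]$ has $\sum_x w(x)\ge \rho' n$ (with $\rho\ll\rho'$), then $\sum_{x,y}w(x)w(y)\mathbf 1_{xy\in E(G)}\ge (d-o(1))\bigl(\sum_x w(x)\bigr)^2$. This should follow by a layer-cake decomposition of $w$ into level sets, each of which is either of size at least $\rho n$ or negligible. Applying the KNRS property of $F$ to level sets, in the averaged form used for complete multipartite graphs in~\cite{KNRS}, or else via a regularity/sampling argument, I then want
\[
t_w(F,G)\ge (1-o(1))\,d^{e(F)}\Bigl(\tfrac1n\textstyle\sum_x w(x)\Bigr)^{v(F)}\quad\text{for every KNRS graph }F.
\]
In the same way, applying KNRS of $F_{uv}$ inside $G[U]$ (which is still locally dense) should give that $W$ is locally dense with density $D$ on all large sets, including in vertex-weighted form. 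The rooted weights of $F_{uv}$ are what make this work.

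The second step handles $F_2$ by conditioning on $\mathbf x$. For a fixed homomorphic copy $\mathbf x$ of $F_1$, define the vertex weight $w_{\mathbf x}(y)=\prod_i W(x_i,y)$. The inner expectation is then a weighted count. I want a variant of the first step in which the edges of $F_2$ use $G$ while the vertex weights come from $w_{\mathbf x}$, which would give a lower bound of
\[
d^{e(F_2)}\bigl(\mathbb{E}_y w_{\mathbf x}(y)\bigr)^{t}.
\]
Jensen's inequality over $\mathbf x$ then reduces everything to bounding $\mathbb{E}_{\mathbf x}\bigl[\mathbf 1_{\Hom}(\mathbf x)\,\mathbb{E}_y\prod_i W(x_i,y)\bigr]$ from below by $(1-o(1))\,d^{e(F_1)}D^{s}$, after normalising by the probability that $\mathbf x$ is a homomorphism. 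This reduced quantity is the homomorphism density of the graph $F_1$ with one extra vertex joined to every vertex through $F_{uv}$. It is handled by the same argument with the roles exchanged: condition on $y$, use the weight $w(x)=W(x,y)$, apply the weighted KNRS bound for $F_1$, and then use the local density of $W$ together with Jensen in $y$.

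The main obstacle will be the step where $G$-edges and $W$-weights are mixed, together with the exponent bookkeeping under Jensen. The weights $w_{\mathbf x}$ are not arbitrary, so the weighted KNRS statement must hold uniformly over all weights whose total mass is at least $\rho' n$. Jensen only lower-bounds $\mathbb{E}[Z^t]$ by $(\mathbb{E}Z)^t$, and this must combine with the normalisation by $q(F_1,G)$, which may exceed $d^{e(F_1)}$. To handle that, I will first use the layer-cake step to discard the contribution from vertex sets of measure below $\rho'$. Then I will use convexity of $z\mapsto z^t$ together with the fact that $q(F_1,G)\ge d^{e(F_1)}-\ve$, so that excess homomorphisms can only help. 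If a direct proof of the mixed weighted statement is awkward, my fallback is to apply a weak regularity lemma to $G$ and to the weighted graph $W$ simultaneously. That reduces everything to a bounded-size weighted reduced graph, where the local density conditions pass to the cluster graph and the above inequalities become finite convexity statements.
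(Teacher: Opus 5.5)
The gap is the Jensen step over $\mathbf x$. Your second step is sound; it is precisely the last step of the paper's proof. For a fixed copy $\mathbf x$ of $F_1$, the weighted bound for $F_2$ gives roughly $d^{e(F_2)}Z_{\mathbf x}^t$, where $Z_{\mathbf x}=\mathbb{E}_y w_{\mathbf x}(y)$. What fails is replacing $\mathbb{E}_{\mathbf x}[\mathbf 1_{\Hom}(\mathbf x)Z_{\mathbf x}^t]$ by a power of $\mathbb{E}_{\mathbf x}[\mathbf 1_{\Hom}(\mathbf x)Z_{\mathbf x}]=q(J(F_1,y;F_{uv}),G)$. Write $p=q(F_1,G)$. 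Jensen for the uniform measure on $\Hom(F_1,G)$ gives the lower bound $p^{1-t}q(J(F_1,y;F_{uv}),G)^t$. Inserting $q(J(F_1,y;F_{uv}),G)\gtrsim d^{e(F_1)}D^s$ yields $(d^{e(F_1)}/p)^{t-1}d^{e(F_1)}D^{st}$, which is below the target whenever $t\ge 2$ and $p>d^{e(F_1)}$. Because $p$ carries the negative exponent $1-t$, surplus homomorphisms of $F_1$ hurt here, so the remedy in your last paragraph runs the wrong way. The information you keep also cannot suffice: $p=1$ with $Z_{\mathbf x}\equiv d^{e(F_1)}D^s$ is consistent with both of your inputs, yet gives $\mathbb{E}Z_{\mathbf x}^t=d^{te(F_1)}D^{st}$. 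For Jensen to work you would need the conditional bound $q(J(F_1,y;F_{uv}),G)\ge q(F_1,G)D^s$. A weighted $\knrs$ bound for $F_1$ only ever produces the factor $d^{e(F_1)}$, not $q(F_1,G)$. The regularity fallback does not help, since the same bookkeeping reappears on the reduced graph.

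The missing idea is to avoid Jensen over copies of $F_1$ altogether. Since $Z_{\mathbf x}^t=\mathbb{E}_{\mathbf y}\prod_j w_{\mathbf x}(y_j)$ exactly, you have $\mathbb{E}_{\mathbf x}[\mathbf 1_{\Hom}(\mathbf x)Z_{\mathbf x}^t]=q(J(F_1,I_t;F_{uv}),G)$. Show that this graph is $\knrs$ by conditioning instead on the $t$ \emph{free} vertices $\mathbf y$, and apply the weighted bound for $F_1$ with the common weight $x\mapsto\prod_j W(x,y_j)$. The resulting $s$-th power is again an exact count, giving $d^{e(F_1)}q(J(I_s,I_t;F_{uv}),G)$. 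Only then use Jensen, twice, on $J(I_s,I_t;F_{uv})$, which has no homomorphism constraint, to reach $q(F_{uv},G)^{st}\ge(D-\ve)^{st}$. This is exactly Lemma~\ref{lem:complete-bipartite-like} together with Claim~\ref{clm:one-side}.

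The paper obtains the weighted bound from the hypergraph Reiher inequality applied to $H_F$ (Lemmas~\ref{lem:hypergraph-reiher-2} and~\ref{lem:aux-knrs}). Its additive error term also removes any need for a mass threshold. Your sampling alternative works too and is simpler: keep each $x$ independently with probability $w(x)$, apply $\knrs$ to $G[U]$, then use Jensen in $|U|$.

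The layer-cake route does not work as stated. The count $t_w(F,G)$ has degree $v(F)$ in $w$, so it produces cross terms between nested level sets that local density does not control. For $F=K_2$ and level sets $A\subseteq V(G)$, the cross term is $\sum_{x\in A}\deg(x)$, which in the example below is about $|A|^2$, well below $d|A|n$.

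Also, $W$ need not be locally dense with density $D$: $\knrs$ inside $G[U]$ only gives density $D(|U|/n)^{v(F_{uv})-2}$. Take $F_{uv}$ to be the path with edges $ua,av$, and $G$ a clique on $\alpha n$ vertices disjoint from a quasirandom graph of density $1/2$, so $d\approx 1/3$. On the clique, $W\approx\alpha=1/10<d^2$. You only ever need the global bound $\mathbb{E}W=q(F_{uv},G)\ge D-\ve$, so this does not affect the rest of the argument.
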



\section{Extensions to hypergraphs}\label{sec:hypergraphs}

The main idea of the proof of Theorem~\ref{thm:main-graph} is to generalize Conjecture~\ref{conj:KNRS} to hypergraphs, thereby finding symmetries that are absent in the original graph and then analysing such hypergraphs. To achieve this, we define $(\rho, d)$-dense hypergraphs. For convenience in the proof, we formulate both $\knrs$ and $(\rho, d)$-dense hypergraphs in a weighted sense. We assume that $k \geq 2$.

We call a function $H: E(K_n^{(k)}) \to [0, 1]$ a weighted $k$-graph on $n$ vertices. Then for a given $k$-graph $F$, we define
$$
    t(F, H) \defeq \sum_{\phi\in \mathrm{Hom}(F, K_n^{(k)})} \prod_{e\in E(F)} H(\phi(e)).
$$
For a vertex subset $U\subseteq V(H)$, we define a weighted induced subgraph $H[U]$ as a restriction of $H$ to $E(K_n^{(k)}[U])$ with $V(H[U]) \defeq U$.

We say a weighted $k$-graph $H$ is \emph{$(\rho, d)$-dense} if for every vertex subset $U\subseteq V(H)$ of size at least $\rho v(H)$, we have 
$$
    t(K_k^{(k)}, H[U]) \geq d|U|^k.
$$

We raise a question about which hypergraphs satisfy the natural hypergraph variant of Conjecture~\ref{conj:KNRS}.

\begin{question}\label{ques:hypergraph}
    Determine whether a given $k$-graph $F$ satisfies the following property:
    For all real numbers $d, \ve > 0$, there exists a positive real number $\rho$ such that 
    \begin{equation}
        t(F, H) \geq (d^{e(F)} - \ve)n^{v(F)}\label{eq:hom-ineuqality-hypergraph}
    \end{equation}
    holds for every $(\rho, d)$-dense weighted $k$-graph $H$.
    If $F$ satisfies this property, then we say $F$ is $\knrs$, as in the graph case.
\end{question}

The weighted and unweighted versions of the above definitions and statements are equivalent. In the graph case, we refer the reader to~\cite[Corollary 2.12]{JLee-thesis}. Since the proof for hypergraphs is straightforward, and the weighted formulation is arguably more natural and slightly stronger than the unweighted one, we omit the details.

Keeping the hypergraph extension of the notion of $\knrs$ in our mind, we indeed proved a stronger result than Theorem~\ref{thm:main-graph} as follows.

\begin{theorem}\label{thm:main-hypergraph}
    Let $F_1$, $F_2$ be $k$-graphs that are $\knrs$ and $F_{uv}$ be a labelled $\knrs$ $k$-graph. Then $J(F_1, F_2; F_{uv})$ is $\knrs$.
\end{theorem}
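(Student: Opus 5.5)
The plan is to write the homomorphism density of $F \defeq J(F_1,F_2;F_{uv})$ as an iterated average, reduce it to a \emph{vertex-weighted} KNRS statement for $F_1$ and $F_2$ separately, and obtain the sharp constant by a Jensen/H\"older step. The sharp constant is the hard part.

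\textbf{Setup.} Let $H$ be a $(\rho,d)$-dense weighted $k$-graph on $n$ vertices. For $a,b\in V(H)$ let
\[
W(a,b)\defeq n^{-(v(F_{uv})-2)}\, t_{a,b}(F_{uv},H),
\]
the normalized count of homomorphisms of $F_{uv}$ with $u\mapsto a$ and $v\mapsto b$. Write $\phi_1\colon V(F_1)\to V(H)$ and $\phi_2\colon V(F_2)\to V(H)$. Then
\[
\frac{t(F,H)}{n^{v(F)}}=\mathbb{E}_{\phi_1,\phi_2}\Big[\prod_{e\in E(F_1)}H(\phi_1 e)\prod_{e\in E(F_2)}H(\phi_2 e)\prod_{x\in V(F_1),\,y\in V(F_2)}W(\phi_1(x),\phi_2(y))\Big].
\]
The target is $d^{e(F_1)+e(F_2)+v(F_1)v(F_2)e(F_{uv})}-\ve$.

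\textbf{Step 1: $W$ is dense on large sets.} For all $A,B\subseteq V(H)$ of size at least $\rho' n$, I aim to show $\mathbb{E}_{a\in A,b\in B}W(a,b)\ge d^{e(F_{uv})}-\ve'$. The natural route is to apply KNRS of $F_{uv}$ to $H[A\cup B]$, which is $(\rho/(2\rho'),d)$-dense. This needs care because the roots $u,v$ must land in $A$ and $B$ respectively.
\begin{itemize}
\item If $A=B$, the step is immediate.
\item In general, I would pass to a random partition or random subsets of $A$ and $B$ and use that KNRS holds on every sufficiently large subset, uniformly in the subset.
\end{itemize}

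\textbf{Step 2: vertex-weighted KNRS.} Next I would prove a vertex-weighted version of KNRS: if $F'$ is KNRS and $w\colon V(H)\to[0,1]$, then
\[
\mathbb{E}_{\phi}\Big[\prod_{e}H(\phi e)\prod_{x}w(\phi(x))\Big]\ge d^{e(F')}\,(\mathbb{E}w)^{v(F')}-\ve.
\]
The approach is to write $w$ as an integral of indicators of level sets, $w=\int_0^1 1_{\{w\ge s\}}\,ds$. Then on each level set $U_s$ with $|U_s|\ge\rho' n$, KNRS applied to $H[U_s]$ gives $t(F',H[U_s])\ge(d^{e(F')}-\ve)|U_s|^{v(F')}$. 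The fraction of $\phi$ with all $v(F')$ vertices in $U_s$ is the $v(F')$-th power of $|U_s|/n$, and averaging over $s$ should recover $(\mathbb{E}w)^{v(F')}$ by convexity.

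\textbf{Step 3: combining.} Condition on $\phi_2$ and set $w_{\phi_2}(a)\defeq\prod_{y}W(a,\phi_2(y))$. Step 2 for $F_1$ gives
\[
\frac{t(F,H)}{n^{v(F)}}\ge d^{e(F_1)}\,\mathbb{E}_{\phi_2}\Big[\prod_{e\in E(F_2)}H(\phi_2 e)\,\big(\mathbb{E}_a w_{\phi_2}(a)\big)^{v(F_1)}\Big]-\ve.
\]
The inner term $\mathbb{E}_a\prod_y W(a,\phi_2(y))$ is again a vertex weight on the tuple $\phi_2$, so the symmetric argument with $F_2$ applies. To close, I need the inequality
\[
\mathbb{E}_a\Big[\prod_y W(a,b_y)\Big]\ \gtrsim\ d^{v(F_2)e(F_{uv})}
\]
on average over tuples $(b_y)$ weighted by copies of $F_2$. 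Here I would use Step 1 together with Jensen/H\"older, which turns the question into a KNRS statement for a star-like hypergraph.

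This is where I expect the hypergraph reformulation to enter. Define an auxiliary weighted $k'$-graph whose edges are the pairs, or tuples, $(a,b)$ carrying weight $W$. The tuple structure $\prod_{x,y}$ is then a complete bipartite pattern in a hypergraph whose density on large sets is supplied by Step 1. The symmetry between the roles of $F_1$ and $F_2$ in this hypergraph is what I would exploit.

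\textbf{Main obstacle.} The main obstacle is getting the exact constant $d^{v(F_1)v(F_2)e(F_{uv})}$. A naive level-set argument loses constant factors, and the weights $w_{\phi_2}$ are not independent of $H$. My first attempt would be a tensor-power/amplification trick: apply the argument to blow-ups of $F$, or to $F$ with $F_{uv}$ replaced by many parallel copies, and take roots. A multiplicative loss $c<1$ then becomes $c^{1/m}\to1$. This reduces the problem to showing a bound of the form $c\,d^{e(F)}$ with $c$ independent of the blow-up size.
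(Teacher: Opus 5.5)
Your overall plan matches the paper's skeleton: condition on one side, apply a vertex-weighted KNRS bound to the other, and finish with a complete bipartite count. Your Step~2 is exactly the intermediate statement the paper needs. However, your proof of Step~2 does not work, and that step is the heart of the theorem.

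\textbf{Why the level-set argument fails.} With a single threshold, $\int_0^1\prod_x 1_{U_s}(\phi(x))\,ds=\min_x w(\phi(x))$. This is an \emph{upper} bound for $\prod_x w(\phi(x))$, so averaging KNRS over level sets bounds the wrong quantity. With independent thresholds $s_x$ and nestedness, you get $\prod_x w(\phi(x))\ge\int\prod_x 1_{U_{\max_y s_y}}(\phi(x))\,ds$. After KNRS this gives only $d^{e(F')}\int_0^1 m t^{m-1}\mu(w\ge t)^m\,dt$, where $m=v(F')$. If $w$ is equidistributed on $[0,1]$, this equals $d^{e(F')}\binom{2m}{m}^{-1}$; for $m=2$ that is $\tfrac16$ versus $(\mathbb{E}w)^2=\tfrac14$.

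Amplification does not rescue this. Blow-ups, or parallel copies of $F_{uv}$, do not turn the count into an $\ell$-th power. For $\ell$ disjoint copies, where the count is an $\ell$-th power, the same example gives $\binom{2\ell m}{\ell m}^{-1}$. Its $\ell$-th root tends to $4^{-m}$, not to $(\mathbb{E}w)^m=2^{-m}$, so the loss compounds rather than vanishing.

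\textbf{The missing idea.} The paper keeps only injective copies of $F_1$ and packages them as the auxiliary weighted $v(F_1)$-graph $H_{F_1}$. Because $F_1$ is KNRS, $H_{F_1}$ is $(\ve'',d^{e(F_1)}-\ve'')$-dense (Lemma~\ref{lem:aux-knrs}). The paper then applies the hypergraph Reiher inequality (Lemma~\ref{lem:hypergraph-reiher-2}) with $f=w$. Since $f\mapsto\sum_e H_{F_1}(e)\prod_{v\in e}f(v)$ is multilinear, shifting mass between two fractional coordinates at fixed $\sum_v f(v)$ is concave. Hence some minimiser is $\{0,1\}$-valued except at one vertex, and everything reduces to the density hypothesis on the single set $\{f=1\}$.

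Alternatively, a random set $U$ containing each $v$ independently with probability $w(v)$ also works. For injective $\phi$ one has $\Pr[\phi(V(F_1))\subseteq U]=\prod_x w(\phi(x))$ exactly, and $\mathbb{E}|U|^{v(F_1)}\ge\bigl(\sum_v w(v)\bigr)^{v(F_1)}$. Deterministic nested level sets fail precisely because they correlate the memberships.

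\textbf{Step~1 is false and unnecessary.} Two disjoint cliques on $n/2$ vertices form a $(\rho,\frac12-\delta)$-dense graph once $\rho n\ge 1/\delta$. Yet for $F_{uv}=K_2$ and $A,B$ the two cliques, $W\equiv 0$ on $A\times B$. So Step~1 should be discarded, and the correct route never needs it.

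\textbf{How Step~3 should close.} Conditioning on $\phi_2$ and applying Step~2 to $F_1$ is right. The problem comes next: the factor $(\mathbb{E}_a w_{\phi_2}(a))^{r}$, with $r=v(F_1)$, is not a product of vertex weights along $\phi_2$, so Step~2 for $F_2$ does not apply as you claim. Instead:
\begin{enumerate}
\item Write the factor as $\mathbb{E}_{a_1,\dots,a_r}\prod_{y}g_{\bar a}(\phi_2(y))$, where $g_{\bar a}(b)\defeq\prod_i W(a_i,b)$.
\item Fix $\bar a=(a_1,\dots,a_r)$ and apply Step~2 to $F_2$ with weight $g_{\bar a}$.
\item What remains is $\mathbb{E}_{\bar a}\bigl(\mathbb{E}_b\prod_i W(a_i,b)\bigr)^{s}$, with $s=v(F_2)$. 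By Jensen twice this is at least $(\mathbb{E}_{a,b}W(a,b))^{rs}$.
\item Finally, $\mathbb{E}_{a,b}W(a,b)=t(F_{uv},H)/n^{v(F_{uv})}\ge d^{e(F_{uv})}-\ve$ by KNRS of $F_{uv}$ applied to $H$ itself.
\end{enumerate}
The last two items are the paper's Lemma~\ref{lem:complete-bipartite-like}. The paper runs the same reduction in the other order, $J(F_1,F_2;F_{uv})\to J(F_1,I_s;F_{uv})\to J(I_r,I_s;F_{uv})$ (Claim~\ref{clm:one-side} and the final display). No rectangle densities and no amplification are needed.
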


Although one may wonder why we consider an extension to hypergraphs, the notion of $\knrs$ for weighted hypergraphs naturally arises in the proof of Theorem~\ref{thm:main-graph}. This is precisely why we introduce the hypergraph setting and establish the more general statement, Theorem~\ref{thm:main-hypergraph}.


\section{Locally dense hypergraphs}\label{sec:lemmas}

Reiher~\cite{Reiher} established a correlation inequality (the case $k=2$ of Lemma~\ref{lem:hypergraph-reiher}) for $(\rho, d)$-dense graphs, which has been instrumental in proving that many graphs with symmetric structures, including odd cycles, are $\knrs$. Our key ingredient in the proof of Theorem~\ref{thm:main-hypergraph} is the hypergraph analogue of Reiher's inequality as follows.

\begin{lemma}[Hypergraph Reiher's inequality]\label{lem:hypergraph-reiher}
    Let $0 < \rho < d \leq 1$ be real numbers and $H$ be an $n$-vertex weighted $(\rho, d)$-dense $k$-graph. Let $f: V(H) \to [0 , 1]$ be a function with $\sum_{v\in V(H)} f(v) \geq \rho n + 1$. Then we have
    \begin{equation}\label{eq:hypergraph-reiher}
        k! \sum_{e\in E(K_n^{(k)})} H(e) \prod_{u_i \in e} f(u_i) \geq d\left( \sum_{v\in V(H)} f(v) \right)^k - 2^k n^{k-1}. 
    \end{equation}
\end{lemma}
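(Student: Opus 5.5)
Write $S\defeq\sum_{v} f(v)$ and
\[
\Phi(f)\defeq k!\sum_{e\in E(K_n^{(k)})} H(e)\prod_{u\in e} f(u).
\]
The plan follows Reiher's argument for $k=2$. It rests on two facts: $\Phi$ is multilinear in the values $f(v)$, and the inequality is trivially true for $0/1$-valued $f$ by $(\rho,d)$-denseness. We reduce to that case by moving $f$ to an extreme point while tracking the loss.

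\textbf{Step 1 (indicator case).} Suppose $f=\mathbf 1_U$ with $|U|\ge \rho n$. Then $\Phi(f)=k!\sum_{e\subseteq U}H(e)$. Here $t(K_k^{(k)},H[U])$ counts homomorphisms of $K_k^{(k)}$ into $H[U]$. Since an edge needs $k$ distinct vertices, only injective maps contribute, and each edge $e\subseteq U$ is counted $k!$ times. So $\Phi(\mathbf 1_U)=t(K_k^{(k)},H[U])\ge d|U|^k$, which gives the claim with no error term.

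\textbf{Step 2 (rounding).} For general $f$, consider the function
\[
g\mapsto \Phi(g)-d\Bigl(\sum_v g(v)\Bigr)^k
\]
on the polytope $P=\{g\in[0,1]^{V}: \sum_v g(v)=S\}$. The sum $\sum_v g(v)$ is constant on $P$, so this function is just $\Phi$ minus a constant. Moreover $\Phi$ is affine in each coordinate separately.

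Take two coordinates $a,b$ with $g(a),g(b)\in(0,1)$ and move along $g+t(\mathbf 1_a-\mathbf 1_b)$. Along this line $\Phi$ is a quadratic in $t$. Its $t^2$ coefficient is $-k!\sum_{e\ni a,b}H(e)\prod_{u\in e\setminus\{a,b\}}g(u)$, which is nonpositive. So the quadratic is concave, and it is not automatically minimized at an endpoint. This is the expected obstacle, exactly as in Reiher's proof.

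The fix is to bound the $t^2$ term instead. It is at most $k!\binom{n-2}{k-2}\le k^2 n^{k-2}$ in absolute value. Writing $\Phi(g+t\delta)=\Phi(g)+c_1t+c_2t^2$, one of the two directions $\pm\delta$ has $c_1$ of the favourable sign, i.e.\ $\Phi$ does not increase in its linear part. Move in that direction until $g(a)$ or $g(b)$ hits $0$ or $1$. Then $t\in[0,1]$ and $\Phi$ drops by at most $|c_2|\le k^2n^{k-2}$.

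Each such move fixes one more coordinate, so after at most $n$ moves at most one fractional coordinate remains. The total loss is at most $n\cdot k^2n^{k-2}=k^2n^{k-1}$. (A more careful count, or alternatively moving one coordinate at a time, will be used to get the stated constant $2^k$ or better.)

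\textbf{Step 3 (the last fractional coordinate).} We are left with $g=\mathbf 1_U+\theta\mathbf 1_w$ where $|U|=\lfloor S\rfloor\ge\rho n$. This is where the hypothesis $S\ge\rho n+1$ is used. Dropping $w$ only decreases $\Phi$, so
\[
\Phi(g)\ge\Phi(\mathbf 1_U)\ge d|U|^k\ge d(S-1)^k\ge dS^k-kS^{k-1}\ge dS^k-kn^{k-1}.
\]
Combining with Step 2, the total loss is at most $(k^2+k)n^{k-1}$. We then check that the bookkeeping gives $\le 2^kn^{k-1}$; $k^2+k\le 2^k$ holds for $k\ge 5$, and the small cases $k=2,3,4$ need the sharper counting of $c_2$. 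If that fails, we instead compare $\Phi(g)$ with $\mathbb{E}\,\Phi(\text{rounded }g)$ under independent rounding: the expectation is exactly $\Phi(g)$ by multilinearity, and the variance of the rounded sum is handled by Jensen's inequality for $x^k$ conditioned on the rounded sum being at least $\rho n$.

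Note that $d>\rho$ is not needed in this route; it presumably only ensures that nonvacuous sets exist.
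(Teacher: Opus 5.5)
Your overall route is the paper's, but Step 2 rests on a false premise, and as written your bookkeeping does not prove the lemma for small $k$. A concave function on a closed interval \emph{is} minimized at an endpoint. So the nonpositive $t^2$ coefficient is not an obstacle; it is exactly what makes the rounding free. Since you lower-bound $\Phi(f)$ by the value at the rounded point, each move needs only the \emph{upper} bound $\Phi(g+t\delta)\le\Phi(g)+(\text{loss})$. If you move in the direction in which the linear term satisfies $c_1t\le 0$, then $\Phi(g+t\delta)-\Phi(g)=c_1t+c_2t^2\le 0$ throughout the move, so the loss is zero.

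Your assertion that $\Phi$ ``drops by at most $|c_2|$'' is false, since the linear part can drop by far more. It is also beside the point, because a drop never hurts. By charging a spurious $k^2n^{k-2}$ per move you end with $(k^2+k)n^{k-1}$. This exceeds $2^kn^{k-1}$ for $k=2,3,4$, including the graph case and the low uniformities $v(F_1),v(F_2)$ at which the lemma is applied. You leave these cases to an unspecified sharper count or to independent rounding.

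The independent-rounding fallback does not work as sketched. Take $S$ close to $\rho n+1$ with many fractional coordinates. Then the rounded set has size below $\rho n$ with probability bounded away from $0$, and on that event denseness gives nothing. The resulting error is of order $(\rho n)^k$, not $O(n^{k-1})$.

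Once you use concavity correctly, your argument is complete. Step 2 gives $g$ with $\sum_v g(v)=S$, at most one fractional coordinate, and $\Phi(g)\le\Phi(f)$. Step 3 then gives $\Phi(f)\ge\Phi(\mathbf 1_U)\ge d|U|^k\ge d(S-1)^k\ge dS^k-kn^{k-1}\ge dS^k-2^kn^{k-1}$.

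The paper packages the same move extremally. It takes a minimizer of $\Phi(f)-d(\sum_v f(v))^k$ over the compact set $\{\sum_v f(v)\ge\rho n+1\}$ having the most $0/1$ coordinates. Along $f+\eta(\mathbf 1_u-\mathbf 1_v)$ the objective equals $G(f)+(\cdots)\eta-A\eta^2$ with $A\ge 0$. An interior minimum therefore forces $A=0$ and a vanishing linear term, and sliding to an endpoint gives a contradiction. Your explicit iterative rounding avoids the compactness and the extremal choice, and yields the slightly better constant $k$ in place of $2^k$.
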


The proof of Lemma~\ref{lem:hypergraph-reiher} is given in the Appendix.

Observe that for $d > 0$, the definition of $(\rho, d)$-dense weighted $k$-graph on $n$ vertices implies that for every vertex subset $U$ of size $\lceil \rho n \rceil$, it contains at least one edge of $K_n^{(k)}$. Hence, $\lceil \rho n \rceil \geq k$. This implies the following.

\begin{observation}\label{obs:n-delta}
    If $H$ is a $(\rho, d)$-dense $k$-graph with $0 < \rho < 1$ and $d > 0$, then 
    $$
        v(H) \geq \frac{k-1}{\rho}.
    $$
\end{observation}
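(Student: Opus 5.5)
The plan is to apply the definition of $(\rho,d)$-denseness to a single set of size $\lceil \rho n\rceil$ and compare the required weight with the number of $k$-subsets of that set. Write $n = v(H)$ and fix any $U\subseteq V(H)$ with $|U| = \lceil \rho n\rceil$. Such a set exists because $\rho<1$ gives $\lceil \rho n\rceil \le n$. By $(\rho,d)$-denseness,
\[
    t(K_k^{(k)}, H[U]) \geq d|U|^k > 0,
\]
where strict positivity uses $d>0$ and $|U|\ge 1$; the latter holds since $\rho n>0$ for $n\ge 1$.

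Next, the homomorphism count $t(K_k^{(k)}, H[U])$ is a sum over maps whose image is an edge of $K_n^{(k)}[U]$. Since $H$ takes values in $[0,1]$ and every edge of $K_n^{(k)}$ consists of $k$ distinct vertices, a nonzero sum forces $K_n^{(k)}[U]$ to contain at least one edge. Hence $|U|\ge k$, that is, $\lceil \rho n\rceil \ge k$.

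Finally, from $\lceil \rho n\rceil < \rho n + 1$ we get $\rho n + 1 > k$, so $\rho n > k-1$. Therefore $n > (k-1)/\rho$, which in particular gives $v(H)\ge (k-1)/\rho$.

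The only point needing care is the degenerate situation, and it is harmless. If $n$ were so small that no edge exists, the empty sum would contradict $d|U|^k>0$; so nothing beyond the positivity of $d$ is used. I do not expect a real obstacle: the argument is a one-line counting contradiction, and the step to double-check is simply that homomorphisms of $K_k^{(k)}$ into $K_n^{(k)}$ are injective on vertices.
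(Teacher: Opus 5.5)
Your argument is correct and is the same as the paper's. Apply denseness to a set $U$ with $|U|=\lceil \rho n\rceil$; then $d|U|^k>0$ forces $K_n^{(k)}[U]$ to contain an edge, so $\lceil \rho n\rceil\ge k$, and you also spell out the final step $\rho n>\lceil \rho n\rceil-1\ge k-1$, which the paper leaves implicit (both arguments tacitly assume $n\ge 1$, since the empty hypergraph is vacuously dense).
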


As a direct corollary of Lemma~\ref{lem:hypergraph-reiher}, we deduce the refined version of Lemma~\ref{lem:hypergraph-reiher} as follows.

\begin{lemma}\label{lem:hypergraph-reiher-2}
    Let $0 < \rho < d \leq 1$ be real numbers and $H$ be an $n$-vertex weighted $(\rho, d)$-dense $k$-graph. Let $f: V(H) \to [0 , 1]$ be a function. Then we have
    \begin{equation}\label{eq:hypergraph-reiher-2}
        k! \sum_{e\in E(K_n^{(k)})} H(e) \prod_{u_i \in e} f(u_i) \geq d\left( \sum_{v\in V(H)} f(v) \right)^k - 2^{k+1} \rho n^k. 
    \end{equation}
\end{lemma}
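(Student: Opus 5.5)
Write $S \defeq \sum_{v\in V(H)} f(v)$. We split into two cases according to whether $S$ is large enough for Lemma~\ref{lem:hypergraph-reiher} to apply. In each case we aim to show that the error term $2^k n^{k-1}$ of \eqref{eq:hypergraph-reiher} (or the whole main term) is at most $2^{k+1}\rho n^k$. The left-hand side of \eqref{eq:hypergraph-reiher-2} is always nonnegative, since $H$ and $f$ take values in $[0,1]$.

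\textbf{Case 1: $S \geq \rho n + 1$.} Lemma~\ref{lem:hypergraph-reiher} gives the left-hand side at least $dS^k - 2^k n^{k-1}$. It therefore suffices to show $2^k n^{k-1} \leq 2^{k+1}\rho n^k$, that is, $1 \leq 2\rho n$. Observation~\ref{obs:n-delta} gives $n \geq (k-1)/\rho \geq 1/\rho$, because $k\geq 2$. Hence $\rho n \geq 1$, which is more than enough. The observation is stated for $k$-graphs, but its proof only uses that some set of size $\lceil \rho n\rceil$ has positive weighted edge count, so it applies equally to weighted $H$.

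\textbf{Case 2: $S < \rho n + 1$.} Here we show that the right-hand side of \eqref{eq:hypergraph-reiher-2} is nonpositive. Again by Observation~\ref{obs:n-delta}, $1 \leq \rho n$, so $S < 2\rho n$. Also $S \leq n$ trivially, and $d \leq 1$. Hence
\[
dS^k \leq S\cdot S^{k-1} < 2\rho n \cdot n^{k-1} = 2\rho n^k \leq 2^{k+1}\rho n^k .
\]
So the right-hand side is negative, and the inequality holds because the left-hand side is nonnegative.

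The only point that needs care is the use of Observation~\ref{obs:n-delta}, which turns the additive constant $1$ into a term of order $\rho n$. This step needs $d>0$ (so that $\lceil \rho n\rceil \geq k$) together with $k \geq 2$, both of which hold by assumption. Everything else is direct substitution.
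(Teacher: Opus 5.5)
Your proof is correct and follows essentially the same argument as the paper. Both use Observation~\ref{obs:n-delta} to get $\rho n \geq 1$, apply Lemma~\ref{lem:hypergraph-reiher} when $\sum_v f(v)$ is large, and otherwise show that the right-hand side is nonpositive; the only cosmetic difference is that the paper splits at $2\rho n$ and uses the bound $d(2\rho n)^k \leq 2^{k+1}\rho n^k$, whereas you split at $\rho n+1$ and use $dS^k \leq S n^{k-1}$.
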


\begin{proof}[Proof of Lemma~\ref{lem:hypergraph-reiher-2}]
    We note that $2\rho n \geq \rho n + 1$ by Observation~\ref{obs:n-delta}. 
    If $\sum_{v\in V(H)} f(v) \geq 2 \rho n$, Lemma~\ref{lem:hypergraph-reiher} implies the desired inequality since $2^{k+1}\rho n^k \geq 2^k n^{k-1}$. Hence, we may assume that $\sum_{v\in V(H)} f(v) < 2 \rho n$. Then the right-hand side of \eqref{eq:hypergraph-reiher-2} is at most $d (2\rho n)^k - 2^{k+1} \rho n^k \leq 0$. This completes the proof.
\end{proof}


In the proof of Theorem~\ref{thm:main-hypergraph}, we construct an auxiliary weighted hypergraph from a given host weighted hypergraph by suitably replacing each weighted homomorphism of a fixed hypergraph $F$. For this purpose, we introduce the following definition.

\begin{definition}
    Let $F$ be a $k$-graph and $H: E(K_n^{(k)}) \to [0, 1]$ be a weighted $k$-graph. We define an auxiliary weighted $|V(F)|$-graph $H_{F}: E(K_n^{(v(F))}) \to [0, 1]$ such that for each $e\in E(K_n^{(v(F))})$
    $$
        H_F(e) \defeq \frac{1}{v(F)!} \sum_{\phi\in \mathrm{Iso}(F, K_n^{(k)}[V(e)])} \prod_{f\in E(F)} H(\phi(f)),
    $$
    where $\mathrm{Iso}(F, K_n^{(k)}[V(e)])$ is the set of bijections from $V(F)$ to $V(e)$.
\end{definition}

The following lemma asserts that $H_F$ inherits the $(\rho, d)$-dense property with appropriate parameters if $F$ is $\knrs$.

\begin{lemma}\label{lem:aux-knrs}
    Let $F$ be a $k$-graph which is $\knrs$. Then for every $0 < d, \ve \leq 1$, there exists $\rho > 0$ that depends only on $d, \ve$ and $F$ such that the following holds.
    Let $H$ be an $n$-vertex weighted $(\rho, d)$-dense $k$-graph. Then $H_F$ is a $(\ve, d^{e(F)} - \ve)$-dense weighted $v(F)$-graph.
\end{lemma}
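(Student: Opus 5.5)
We need to show: for every $U\subseteq V(H)$ with $|U|\ge \ve n$, we have $t(K_m^{(m)}, H_F[U]) \ge (d^{e(F)}-\ve)|U|^m$, where $m=v(F)$. The plan is to relate this quantity to the homomorphism count of $F$ in the induced subgraph $H[U]$, and then apply the KNRS property of $F$ to $H[U]$, which is itself locally dense with a rescaled parameter.

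\textbf{Step 1: unpack the definition.} The quantity $t(K_m^{(m)},H_F[U])$ sums over injective maps $\psi:[m]\to U$ the value $H_F(\psi([m]))$. Each $m$-set $S\subseteq U$ is hit by $m!$ such maps. Since $H_F(S)$ carries the factor $1/m!$ in front of a sum over bijections $V(F)\to S$, we get
\[
t(K_m^{(m)},H_F[U])=\sum_{\phi} \prod_{f\in E(F)}H(\phi(f)),
\]
where the sum runs over injective maps $\phi:V(F)\to U$. (If the paper counts edges of $K_m^{(m)}$ as unordered sets rather than ordered maps, the normalisation shifts by $m!$, but in either reading the sum should be exactly the injective part of $t(F,H[U])$.) Consequently
\[
t(K_m^{(m)},H_F[U]) \ge t(F,H[U]) - \binom{m}{2}|U|^{m-1},
\]
because non-injective maps number at most $\binom m2|U|^{m-1}$ and each has weight at most $1$.

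\textbf{Step 2: $H[U]$ is locally dense.} If $W\subseteq U$ satisfies $|W|\ge (\rho/\ve)|U|$, then $|W|\ge \rho n$, so $(\rho,d)$-density of $H$ gives $t(K_k^{(k)},H[W])\ge d|W|^k$. Hence $H[U]$ is $(\rho/\ve,d)$-dense. Now let $\rho_0$ be the value that the KNRS property of $F$ provides for parameters $d$ and $\ve/2$, and set $\rho=\ve\rho_0$. Applying the property to $H[U]$ gives
\[
t(F,H[U])\ge \bigl(d^{e(F)}-\ve/2\bigr)|U|^m.
\]

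\textbf{Step 3: absorb the error.} It remains to ensure $\binom m2|U|^{m-1}\le (\ve/2)|U|^m$, which holds once $|U|\ge m^2/\ve$. We have $|U|\ge \ve n$, and Observation~\ref{obs:n-delta} gives $n\ge (k-1)/\rho$. So $|U|\ge \ve(k-1)/\rho = (k-1)/\rho_0$, and by shrinking $\rho_0$ if necessary this is at least $m^2/\ve$.

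\textbf{Main obstacle.} The KNRS property in Question~\ref{ques:hypergraph} is stated for all $(\rho,d)$-dense weighted $k$-graphs, while the graph definition only covers sufficiently large ones. If only the "sufficiently large" version is available, the fix is the same as in Step 3: shrinking $\rho$ forces $|U|$ to be large through Observation~\ref{obs:n-delta}. I expect the bookkeeping between the injective count and the full homomorphism count, together with this size requirement, to be the only delicate point.
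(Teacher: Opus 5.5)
Your proposal is correct and follows essentially the same route as the paper. It identifies $t(K_{v(F)}^{(v(F))},H_F[U])$ with the injective part of $t(F,H[U])$, uses that $H[U]$ is $(\rho/\ve,d)$-dense so the KNRS property of $F$ applies (with error $\ve/2$ and $\rho=\ve\theta$), and absorbs the degenerate maps via Observation~\ref{obs:n-delta} after shrinking $\rho$; the only difference is that your count $\binom{m}{2}|U|^{m-1}$ of non-injective maps is tighter than the paper's $v(F)^{v(F)+1}|U|^{v(F)-1}$, which is cosmetic.
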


\begin{proof}[Proof of Lemma~\ref{lem:aux-knrs}]
    Let $U \subseteq V(H_F)$ be a vertex subset of size at least $\ve n$.
    Since $F$ is $\knrs$, there exists $\theta > 0$ such that $t(F,L) \geq (d^{e(F)}-\ve/2)v(L)^{v(F)}$ for every weighted $(\theta,d)$-dense $k$-graph $L$. We choose
    $$
        \rho \defeq \min\left\{\ve\theta, \frac{(k-1)\ve^2}{2v(F)^{v(F)+1}}, \frac{\ve}{2}\right\}.
    $$ 
    Note that $\rho < \ve$.
    Then by definition, we have
    \begin{equation}\label{eq:aux-1}
        t(K_{v(F)}^{(v(F))}, H_F[U]) = \sum_{e\in K_n^{(v(F))}[U]} \sum_{\phi\in \mathrm{Iso}(F, K_n^{(k)}[V(e)])} \prod_{f\in E(F)} H(\phi(f)).
    \end{equation}

    Observe that the right-hand side of \eqref{eq:aux-1} is the same as the number of all weighted isomorphic copies of $F$ in $H[U]$. Since the number of degenerated copies of weighted homomorphic copies of $F$ in an $|U|$-vertex $k$-graph is at most $v(F)^{v(F)+1}|U|^{v(F)-1}$, we obtain 
    \begin{equation}\label{eq:aux-2}
        \sum_{e\in K_n^{(v(F))}[U]} \sum_{\phi\in \mathrm{Iso}(F, K_n^{(k)}[V(e)])} \prod_{f\in E(F)} H(\phi(f)) \geq t(F, H[U]) - v(F)^{v(F)+1}|U|^{v(F)-1}.
    \end{equation}

    Since $F$ is $\knrs$ and $H[U]$ is $(\theta, d)$-dense, we deduce 
    \begin{equation}\label{eq:aux-3}
        t(F, H[U]) - v(F)^{v(F)+1}|U|^{v(F)-1} \geq (d^{e(F)}-\ve/2)|U|^{v(F)} - v(F)^{v(F)+1}|U|^{v(F)-1}
    \end{equation}
    provided that $\rho \leq \ve\theta$.

    By combining \eqref{eq:aux-1},\eqref{eq:aux-2}, and \eqref{eq:aux-3}, we obtain
    \begin{equation}\label{eq:aux-4}
        t(K_{v(F)}^{(v(F))}, H_F[U]) \geq \left(d^{e(F)} - \frac{\ve}{2} - \frac{v(F)^{v(F)+1}}{\ve n} \right) |U|^{v(F)}
    \end{equation}
    
    By Observation~\ref{obs:n-delta}, we have
    \begin{equation}\label{eq:aux-5}
        \frac{v(F)^{v(F)+1}}{\ve n} \leq \frac{\ve}{2}.
    \end{equation}

    Hence, by combining \eqref{eq:aux-4} and \eqref{eq:aux-5}, we have
    \begin{equation*}
         t(K_{v(F)}^{(v(F))}, H_F[U]) \geq (d^{e(F)} - \ve)|U|^{v(F)}.
    \end{equation*}
    Thus, $H_F$ is a $(\ve, d^{e(F)} - \ve)$-dense $(v(F))$-graph.
    This completes the proof.
\end{proof}

Lastly, we define the following notion of labelled weighted homomorphism.
\begin{definition}
    Let $F$ be a $k$-graph and $H$ be an $n$-vertex weighted $k$-graph. Let $U \subseteq V(F)$ be a vertex subset and $\psi:U\to V(H)$ be a map. Then we define
    $$
        t(F, H; \psi) \defeq \sum_{\substack{\phi\in \mathrm{hom}(F, K_n^{(k)}) \\: \phi\vert_U = \psi}}\prod_{f\in E(F)} H(\phi(f)),
    $$ and
    $$
        q(F, H; \psi) \defeq \frac{t(F, H; \psi)}{n^{v(F) - |U|}}.
    $$
\end{definition}

For simplicity, we sometimes write a single vertex $v$ as $\{v\}$.
Note that $q(F, H; \psi) \in [0, 1]$, and $t(F,H;\psi)=q(F,H;\psi)=0$ if $\psi$ has no homomorphic extension to $F$.

\begin{lemma}\label{lem:labelled-sum}
    Let $F$ be a $k$-graph and $H$ be an $n$-vertex weighted $k$-graph. Let $U \subseteq V(F)$ be a vertex subset. Then we have
    \begin{equation*}
       0\leq t(F, H) - \sum_{\substack{\psi\in \hom(F[U], K_n^{(k)})\\: |\psi(U)| = |U|}} t(F, H; \psi) \leq (v(F)-1)^{v(F)} n^{v(F)-1}.
    \end{equation*}
\end{lemma}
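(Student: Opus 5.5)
We prove Lemma~\ref{lem:labelled-sum} by partitioning homomorphisms according to their restriction to $U$. Every $\phi\in\Hom(F,K_n^{(k)})$, viewed as a map $V(F)\to V(H)$, has a restriction $\phi|_U$. This restriction is a map $U\to V(H)$, and it is a homomorphism of $F[U]$ into $K_n^{(k)}$ because every edge of $F[U]$ is an edge of $F$. Grouping the terms of $t(F,H)$ by $\psi=\phi|_U$ gives the exact identity
\[
t(F,H)=\sum_{\psi\in\hom(F[U],K_n^{(k)})} t(F,H;\psi),
\]
where a map $\psi$ with no homomorphic extension contributes $0$, as noted before the lemma.

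The difference in the statement is therefore
\[
\sum_{\psi:\,|\psi(U)|<|U|} t(F,H;\psi),
\]
a sum of nonnegative terms, since every weight of $H$ lies in $[0,1]$. This gives the lower bound $0$ immediately.

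For the upper bound, we bound each product of weights by $1$. The remaining sum is then at most the number of maps $\phi:V(F)\to V(H)$ that are not injective on $U$, and in particular at most the number of non-injective maps $V(F)\to[n]$. Write $m=v(F)$. A non-injective map has image of size at most $m-1$.

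We count such maps in two steps.
\begin{itemize}
\item First choose which pair of vertices of $F$ collide, at most $\binom{m}{2}$ ways; this gives the bound $\binom{m}{2}n^{m-1}$.
\item Alternatively, count by image size: choose an ordered image set of size at most $m-1$, in at most $n^{m-1}$ ways, then choose a surjection-type assignment $V(F)\to[m-1]$ onto these image positions, in at most $(m-1)^m$ ways.
\end{itemize}
The second count gives at most $(m-1)^m n^{m-1}$ maps. Concretely, every non-injective map factors as $V(F)\to[m-1]\to[n]$: label the distinct image values in order of first appearance, padding with arbitrary values if fewer than $m-1$ values occur. This factorization gives the claimed bound $(v(F)-1)^{v(F)}n^{v(F)-1}$.

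When $m=1$ there are no non-injective maps and the bound is trivial. When $|U|\le 1$ the inner sum is empty and the bound is again trivial.

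The only care needed is in checking that the factorization through $[m-1]$ covers all non-injective maps; the rest is bookkeeping.
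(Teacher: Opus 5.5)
Your proof is correct and follows essentially the same approach as the paper, which observes that the restricted sum lies between the weighted count of injective copies of $F$ and $t(F,H)$. The difference is therefore bounded by the number of degenerate (non-injective) maps $V(F)\to V(H)$. Your factorization through $[v(F)-1]$ just supplies the explicit counting behind the bound $(v(F)-1)^{v(F)}n^{v(F)-1}$, which the paper asserts without detail.
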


\begin{proof}[Proof of Lemma~\ref{lem:labelled-sum}]
    We note that the quantity 
    $$
        \sum_{\substack{\psi\in \hom(F[U], K_n^{(k)})\\: |\psi(U)| = |U|}} t(F, H; \psi)
    $$ 
    is at most the number of weighted homomorphic copies of $F$ in $H$ and at least the number of weighted isomorphic copies of $F$ in $H$. Hence, the first inequality is trivial, and the difference between $t(F, H)$ and this quantity is at most the number of degenerated weighted homomorphic copies of $F$ in $H$, which is at most $(v(F)-1)^{v(F)} n^{v(F)-1}$. This completes the proof.
\end{proof}


\section{Proofs of the main results}\label{sec:proof-main}
In this section, we prove Theorem~\ref{thm:main-hypergraph} and Theorem~\ref{thm:main-cograph}.
The proof involves several closely related parameters, whose hierarchy we specify as follows.
When we write $0 < \alpha \ll \beta_1, \beta_2 \ll \gamma < 1$, we mean that there exist functions $f$ and $g$ such that the statement holds whenever $\beta_1, \beta_2 < f(\gamma)$ and $\alpha < g(\beta_1, \beta_2)$. We do not attempt to determine these functions explicitly.

We first prove a special case of Theorem~\ref{thm:main-hypergraph} as a lemma, namely the case where $F_1, F_2$ are disjoint unions of vertices. We denote by $I_{\ell}$ the hypergraph of the disjoint union of $\ell$ vertices.

\begin{lemma}\label{lem:complete-bipartite-like}
    Let $r,s$ be positive integers and $F_{uv}$ be a labelled $\knrs$ $k$-graph. Then $J(I_{r}, I_{s}; F_{uv})$ is $\knrs$.
\end{lemma}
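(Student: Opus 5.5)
The plan is to reduce the homomorphism count of $J \defeq J(I_r, I_s; F_{uv})$ to a complete bipartite count in an auxiliary, possibly asymmetric, weighted bipartite kernel. Then two applications of Jensen's inequality give the bound, the same way $K_{r,s}$ is shown to satisfy Sidorenko's inequality. No local density is needed beyond the single fact that $F_{uv}$ is $\knrs$.

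Write $V(I_r)=\{x_1,\dots,x_r\}$ and $V(I_s)=\{y_1,\dots,y_s\}$, let $H$ be an $n$-vertex weighted $(\rho,d)$-dense $k$-graph, and set $m \defeq v(F_{uv})$. For $a,b\in V(H)$ let $\psi_{a,b}$ send $u\mapsto a$ and $v\mapsto b$, and put
\[
W(a,b) \defeq q(F_{uv},H;\psi_{a,b})\in[0,1].
\]
$W$ need not be symmetric; this does no harm.

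\textbf{Step 1: factorization.} Fix an image $(a_1,\dots,a_r,b_1,\dots,b_s)$ of the $x_i,y_j$. The $rs$ glued copies of $F_{uv}$ share only these labelled vertices, and every edge of $J$ lies inside exactly one copy. Hence the weighted homomorphisms of $J$ extending this image factor exactly into a product over copies. Collisions among the $a_i,b_j$ are allowed, since we count homomorphisms. This gives
\[
q(J,H)=\mathbb{E}_{a\in V(H)^r}\,\mathbb{E}_{b\in V(H)^s}\prod_{i\le r,\,j\le s}W(a_i,b_j)
=\mathbb{E}_{a}\Bigl(\mathbb{E}_{c}\prod_{i=1}^r W(a_i,c)\Bigr)^{s},
\]
where the normalizations work out because $v(J)=r+s+rs(m-2)$. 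Checking this bookkeeping carefully is the only real point of care, and I expect it to be the main obstacle, even though it is routine.

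\textbf{Step 2: two applications of Jensen.} Convexity of $z\mapsto z^s$ on $[0,1]$ bounds the last expression below by
\[
\Bigl(\mathbb{E}_{a}\mathbb{E}_{c}\prod_i W(a_i,c)\Bigr)^s=\Bigl(\mathbb{E}_{c}\bigl(\mathbb{E}_{a}W(a,c)\bigr)^r\Bigr)^s .
\]
Convexity of $z\mapsto z^r$ then bounds this below by $\bigl(\mathbb{E}_{a,c}W(a,c)\bigr)^{rs}$. Moreover, $\mathbb{E}_{a,c}W(a,c)=q(F_{uv},H)$ by summing the labelled counts over all placements of $u$ and $v$.

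\textbf{Step 3: invoke the hypothesis on $F_{uv}$.} Since $F_{uv}$ is $\knrs$, for any $\ve'>0$ we can choose $\rho$ so that $q(F_{uv},H)\ge d^{e(F_{uv})}-\ve'$. Therefore
\[
q(J,H)\ge \bigl(d^{e(F_{uv})}-\ve'\bigr)^{rs}\ge d^{rs\,e(F_{uv})}-rs\,\ve' = d^{e(J)}-rs\,\ve',
\]
using $e(J)=rs\,e(F_{uv})$ and the elementary inequality $(A-\ve')^{N}\ge A^N-N\ve'$ for $A\le1$. Taking $\ve'=\ve/(rs)$ completes the argument.
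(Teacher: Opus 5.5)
Your proof is correct and is essentially the paper's argument: the paper also uses the product structure over the $rs$ glued copies of $F_{uv}$ and two applications of Jensen's inequality (first for $J(I_r,z;F_{uv})$ by fixing the image of $z$, then for the whole graph by fixing the image of $I_r$). The only difference is that the paper sums only over injective placements of the labelled vertices and absorbs the degenerate maps through Lemma~\ref{lem:labelled-sum}, whereas your summation over all maps makes every identity exact and gives the clean bound $q(J,H)\ge q(F_{uv},H)^{rs}$ (one small point: in Step 3 take $\ve'\le d^{e(F_{uv})}$, or note that the target bound is trivial otherwise, before raising $q(F_{uv},H)\ge d^{e(F_{uv})}-\ve'$ to the power $rs$).
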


\begin{proof}[Proof of Lemma~\ref{lem:complete-bipartite-like}]
    We fix the hierarchy of parameters as 
    $$
        0 < \rho \ll \mu \ll \eta \ll d, \ve, \frac{1}{v(J(I_r, I_s; F_{uv}))} \leq 1.
    $$
    From Observation~\ref{obs:n-delta}, we may assume that $n$ is sufficiently large.

    Let $H$ be an $n$-vertex weighted $(\rho, d)$-dense $k$-graph. 
    We first claim that $J(I_r, z; F_{uv})$ is $\knrs$, where $z$ is a single vertex graph with vertex set $\{z\}$.

    \begin{claim}\label{clm:z}
        $t(J(I_r, z; F_{uv}), H) \geq (d^{e(J(I_r, z; F_{uv}))} - \eta) n^{v(J(I_r, z; F_{uv}))}$.
    \end{claim}

    \begin{claimproof}[Proof of Claim~\ref{clm:z}]
        By applying Jensen's inequality and the fact that $F_{uv}$ is $\knrs$, we obtain
        \allowdisplaybreaks
        \begin{align}
            t(J(I_r, z; F_{uv}), H) &\geq \sum_{\substack{\psi\in \Hom(z, K_n^{(k)}), \\ |\psi(z)| = 1}} t(J(I_r, z; F_{uv}), H; \psi) \label{eq:fix-1}
            \\&\geq \sum_{\substack{\psi\in \Hom(z, K_n^{(k)}), \\ |\psi(z)| = 1}} \left( t(J(x, z; F_{uv}), H; \psi)\right)^r \nonumber
            \\&\geq n \cdot \left( n^{-1} \sum_{\substack{\psi\in \Hom(z, K_n^{(k)}), \\ |\psi(z)| = 1}}  t(J(x, z; F_{uv}), H; \psi) \right)^r \label{eq:jensen-1}
            \\&\geq n \cdot \left( n^{-1} \cdot \left(t(J(x, z; F_{uv}), H) - \mu n^{v(J(x, z; F_{uv}))}\right) \right)^r \label{eq:fix-2}
            \\&= n^{-r + 1} \cdot \left( t(F_{uv}, H) - \mu n^{v(F_{uv})} \right)^r \nonumber
            \\&\geq n^{-r + 1} \cdot \left( \left(d^{e(F_{uv})} -2\mu \right) n^{v(F_{uv})} \right)^r\label{eq:using-knrs-Fuv}
            \\&\geq (d^{e(J(I_r, z; F_{uv}))} - \eta)n^{v(J(I_r, z; F_{uv}))}. \nonumber
        \end{align}
        The inequalities \eqref{eq:fix-1}, \eqref{eq:fix-2} hold by Lemma~\ref{lem:labelled-sum}, and \eqref{eq:jensen-1} holds by applying Jensen's inequality together with the fact that $x\to x^r$ is a concave upward function. Lastly, \eqref{eq:using-knrs-Fuv} is true because $F_{uv}$ is $\knrs$. This completes the proof.
    \end{claimproof}
    
    Similarly, together with Claim~\ref{clm:z}, we have
    \allowdisplaybreaks
    \begin{align}
        t(J(I_r, I_s; F_{uv}), H) &\geq \sum_{\substack{\psi\in \Hom(I_r, K_n^{(k)}), \\ |\psi(I_r)| = r}} t(J(I_r, I_s; F_{uv}), H; \psi) \label{eq:fix-3}
        \\&\geq \sum_{\substack{\psi\in \Hom(I_r, K_n^{(k)}), \\ |\psi(I_r)| = r}} \left( t(J(I_r, z; F_{uv}), H; \psi)\right)^s \nonumber
        \\&\geq n^r \cdot \left( n^{-r} \sum_{\substack{\psi\in \Hom(I_r, K_n^{(k)}), \\ |\psi(I_r)| = r}}  t(J(I_r, z; F_{uv}), H; \psi) \right)^s \label{eq:jensen-2}
        \\&\geq n^r \cdot \left( n^{-r} \cdot \left(t(J(I_r, z; F_{uv}), H) - \eta n^{v(J(I_r, z; F_{uv}))}\right) \right)^s\label{eq:fix-4}
        \\&\geq n^{-r(s-1)} \cdot \left( \left( d^{e(J(I_r, z;F_{uv}))} - 2\eta \right) n^{v(J(I_r, z; F_{uv}))}\right)^s\label{eq:using-above-clm}
        \\&\geq (d^{e(J(I_r, I_s;F_{uv}))} - \ve) n^{v(J(I_r, I_s; F_{uv}))}. \nonumber
    \end{align}
    The above inequalities hold for exactly the same reason in the proof of Claim~\ref{clm:z}. The inequalities \eqref{eq:fix-3} and \eqref{eq:fix-4} hold by Lemma~\ref{lem:labelled-sum}, and \eqref{eq:jensen-2} is obtained from Jensen's inequality and the fact that $x\to x^s$ is a concave upward function. Lastly, Claim~\ref{clm:z} implies \eqref{eq:using-above-clm} holds.
    This completes the proof.
\end{proof}

We are now ready to prove Theorem~\ref{thm:main-hypergraph}.

\begin{proof}[Proof of Theorem~\ref{thm:main-hypergraph}]
    We fix the hierarchy of parameters as 
    $$
        0 < \rho \ll \ve'' \ll \ve' \ll d, \ve, \frac{1}{v(J(F_1, F_2; F_{uv}))} \leq 1.
    $$
    Again, from Observation~\ref{obs:n-delta}, we may assume that $n$ is sufficiently large.

    Let $r \defeq v(F_1)$ and $s \defeq v(F_2)$. Now let $H$ be an $n$-vertex weighted $(\rho, d)$-dense $k$-graph. By Lemma~\ref{lem:aux-knrs}, both $H_{F_1}$ and $H_{F_2}$ are $(\ve'', d^{e(F_1)} - \ve'')$-dense and $(\ve'', d^{e(F_2)} - \ve'')$-dense, respectively. If $r=1$ or $s=1$, the corresponding auxiliary hypergraph has all vertex weights equal to one, so the corresponding application of Lemma~\ref{lem:hypergraph-reiher-2} below holds directly. 

    For the sake of simplicity, let $F = J(F_1, F_2; F_{uv})$ and $F' = J(F_1, I_s; F_{uv})$.
    We start by proving the following claim.
    
    \begin{claim}\label{clm:one-side}
        $t(F', H) \geq (d^{e(F')} - \ve') n^{v(F')}$.
    \end{claim}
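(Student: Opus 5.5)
We need to show that $F' = J(F_1, I_s; F_{uv})$ satisfies $t(F',H) \ge (d^{e(F')}-\ve')n^{v(F')}$. The plan is to fix the $s$ vertices of $I_s$ and read the rest of $F'$ off as a weighted $r$-uniform structure to which Lemma~\ref{lem:hypergraph-reiher-2} applies, using the density of $H_{F_1}$ given by Lemma~\ref{lem:aux-knrs}.

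\textbf{Step 1: fix the $I_s$ side.} By Lemma~\ref{lem:labelled-sum}, up to an error of $O(n^{v(F')-1})$, we have $t(F',H)=\sum_{\psi} t(F',H;\psi)$, where $\psi$ ranges over injective maps $I_s\to V(H)$. Fix such a $\psi$ with image $(y_1,\dots,y_s)$. For a vertex $x\in V(H)$, define
\[
g_\psi(x)\defeq \prod_{j=1}^s q(F_{uv},H;u\mapsto x, v\mapsto y_j)\in[0,1].
\]
Apart from degenerate homomorphisms, which cost another $O(n^{v(F')-1})$ overall, $t(F',H;\psi)$ factorises as a sum over embeddings $\phi$ of $F_1$. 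Each term is the weight of $\phi(F_1)$ times $\prod_{w\in V(F_1)} g_\psi(\phi(w))$, times $n^{r s (v(F_{uv})-2)}$. Grouping the embeddings by their image $r$-set $e$ gives
\[
t(F',H;\psi)\approx n^{rs(v(F_{uv})-2)}\, r!\sum_{e} H_{F_1}(e)\prod_{x\in e} g_\psi(x).
\]

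\textbf{Step 2: apply Reiher's inequality.} By Lemma~\ref{lem:aux-knrs}, $H_{F_1}$ is $(\ve'',d^{e(F_1)}-\ve'')$-dense. Lemma~\ref{lem:hypergraph-reiher-2}, applied with $f=g_\psi$ and with $k$ replaced by $r$, then yields
\[
r!\sum_e H_{F_1}(e)\prod_{x\in e} g_\psi(x)\ \ge\ (d^{e(F_1)}-\ve'')\Big(\sum_x g_\psi(x)\Big)^r-2^{r+1}\ve'' n^r .
\]
When $r=1$ this bound is trivial, as the paper notes. This step is the crux. The density of $H_{F_1}$ only controls sums over sets, and Reiher's inequality is exactly what converts it into a bound against an arbitrary weight function $g_\psi$. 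The constraint $\ve''<d^{e(F_1)}-\ve''$, needed for the lemma's hypothesis $\rho<d$, holds by the hierarchy.

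\textbf{Step 3: average over $\psi$.} Summing over $\psi$ and applying Jensen's inequality (convexity of $t\mapsto t^r$) gives
\[
\sum_\psi\Big(\sum_x g_\psi(x)\Big)^r\ \ge\ n^{s}\Big(n^{-s}\sum_\psi\sum_x g_\psi(x)\Big)^r .
\]
The inner double sum $\sum_\psi\sum_x g_\psi(x)$ is, up to normalisation and degenerate terms, $t(J(z,I_s;F_{uv}),H)$. By Lemma~\ref{lem:complete-bipartite-like} with $r=1$ (using the symmetric roles of $u$ and $v$, or relabelling), this is at least $(d^{s\,e(F_{uv})}-\mu)n^{v(J(z,I_s;F_{uv}))}$. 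Multiplying out the exponents gives
\[
d^{e(F_1)}\cdot d^{rs\,e(F_{uv})}=d^{e(F')},
\]
and the powers of $n$ match $v(F')=r+s+rs(v(F_{uv})-2)$.

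\textbf{Step 4: collect errors.} The errors are the degenerate-copy terms from Lemma~\ref{lem:labelled-sum}, of order $O(n^{v(F')-1})$, which are negligible since $n$ is large by Observation~\ref{obs:n-delta}. The remaining errors are $\ve''$ and $\mu$, which are absorbed into $\ve'$ by choosing $\rho\ll\ve''\ll\ve'$. The main bookkeeping obstacle is the exact factorisation in Step 1: the product of labelled densities must match the definition of $H_{F_1}$, including the $1/v(F)!$ normalisation and the fact that $g_\psi$ depends only on vertex images.
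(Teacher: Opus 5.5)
Your proposal is correct and follows the paper's argument. Your $g_\psi$ coincides with the paper's $f_\psi$, and Steps 1--2 (fixing the $I_s$ side, then applying Lemma~\ref{lem:hypergraph-reiher-2} to $H_{F_1}$) are exactly the paper's. The only difference is Step 3: you apply Jensen over $\psi$ and need Lemma~\ref{lem:complete-bipartite-like} only for $J(I_1,I_s;F_{uv})$, whereas the paper notes that $n^{rs(v(F_{uv})-2)}\bigl(\sum_x f_\psi(x)\bigr)^r = t(J(I_r,I_s;F_{uv}),H;\psi)$ and invokes the lemma with general $r$. These are equivalent, since your Jensen step is the same one used inside that lemma's proof.
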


    \begin{claimproof}[Proof of Claim~\ref{clm:one-side}]
        
        For $\psi\in \Hom(I_s, K_n^{(k)})$ with $|\psi(I_s)| = s$ and a vertex $v\in V(K_n^{(k)})$, we define a function $f_{\psi}: V(K_n^{(k)}) \to [0, 1]$ as 
        $$
            f_{\psi}(v) \defeq  q(J(x, I_s, F_{uv}), H; \widehat{\psi}),
        $$
        where $\widehat{\psi}:\{x\}\cup V(I_s)\to V(H)$ is the map such that $\widehat{\psi}(x) = v$ and $\widehat{\psi}\vert_{V(I_s)} = \psi$.

        Then we have
        \allowdisplaybreaks
        \begin{align}
            t(F', H) &\geq  \sum_{\substack{\psi \in \Hom(I_s, K_n^{(k)}), \\ |\psi(I_s)| = s}} \sum_{e\in K_n^{(r)}} r! H_{F_1}(e) n^{(v(F_{uv})-2)rs} \prod_{v\in V(e)} f_{\psi}(v) \nonumber
            \\ &\geq n^{(v(F_{uv}) - 2)rs} \sum_{\substack{\psi \in \Hom(I_s, K_n^{(k)}), \\ |\psi(I_s)| = s}} \left[ (d^{e(F_1)} - \ve'') \left( \sum_{v\in V(K_n^{(k)})} f_{\psi}(v)\right)^r -2^{r+1}\ve'' n^r\right] \label{eq:using-hypergraph-reiher}
            \\ &\geq \left[ (d^{e(F_1)} - \ve'') \sum_{\substack{\psi \in \Hom(I_s, K_n^{(k)}), \\ |\psi(I_s)| = s}} t(J(I_r, I_s; F_{uv}), H; \psi)\right] - 2^{r+1}\ve'' n^{v(F')} \nonumber
            \\ &\geq (d^{e(F_1)} - \ve'')t(J(I_r, I_s; F_{uv}), H) - (2^{r+1}+1)\ve'' n^{v(F')} \label{eq:fix-5}
            \\ &\geq  (d^{e(F_1)} - \ve'')(d^{e(F_{uv})rs} - \ve'') n^{v(F')} - (2^{r+1}+1)\ve'' n^{v(F')} \label{eq:using-bipartite-like}
            \\ &\geq (d^{e(F')} - \ve')n^{v(F')}. \nonumber
        \end{align}
        The inequality \eqref{eq:using-hypergraph-reiher} holds by Lemma~\ref{lem:hypergraph-reiher-2}, and \eqref{eq:fix-5} is obtained from Lemma~\ref{lem:labelled-sum}. Also, we obtain \eqref{eq:using-bipartite-like} by using the fact that $J(I_r, I_s; F_{uv})$ is $\knrs$, which is provided by Lemma~\ref{lem:complete-bipartite-like}.
        This completes the proof.
    \end{claimproof}

    Similar to the proof of Claim~\ref{clm:one-side}, for $\phi\in \Hom(F_1, K_n^{(k)})$ with $|\phi(V(F_1))| = r$ and a vertex $v\in V(K_n^{(k)})$, define $w_{\phi} \defeq \prod_{a\in E(F_1)} H(\phi(a))$ and a function $g_{\phi}: V(K_n^{(k)}) \to [0, 1]$ as 
    $$
        g_{\phi}(v) \defeq  q(J(I_r, y, F_{uv}), H; \widehat{\phi}),
    $$
    where $\widehat{\phi}:\{y\}\cup V(F_1)\to V(H)$ is the map such that $\widehat{\phi}(y) = v$ and $\widehat{\phi}\vert_{V(F_1)} = \phi$, and $I_r$ has vertex set $V(F_1)$.

    Then we have
    \allowdisplaybreaks
    \begin{align}
        t(F, H) &\geq  \sum_{\substack{\phi \in \Hom(F_1, K_n^{(k)}), \\ |\phi(V(F_1))| = r}} w_{\phi} \sum_{e\in K_n^{(s)}} s! H_{F_2}(e) n^{(v(F_{uv})-2)rs} \prod_{v\in V(e)} g_{\phi}(v) \nonumber
        \\ &\geq n^{(v(F_{uv}) - 2)rs} \sum_{\substack{\phi \in \Hom(F_1, K_n^{(k)}), \\ |\phi(V(F_1))| = r}} w_{\phi} \left[ (d^{e(F_2)} - \ve') \left( \sum_{v\in V(K_n^{(k)})} g_{\phi}(v)\right)^s -\ve'n^s\right] \label{eq:using-hypergraph-reiher-2}
        \\ &\geq \left[ (d^{e(F_2)} - \ve') \sum_{\substack{\phi \in \Hom(F_1, K_n^{(k)}), \\ |\phi(V(F_1))| = r}} t(J(F_1, I_s; F_{uv}), H; \phi)\right] - \ve' n^{v(F)}
        \\ &\geq (d^{e(F_2)} - \ve')t(F', H) - 2\ve' n^{v(F)} \label{eq:fix-6}
        \\ &\geq  (d^{e(F_2)} - \ve')(d^{e(F')} - \ve') n^{v(F')} - 2\ve' n^{v(F)} \label{eq:using-one-side}
        \\ &\geq (d^{e(F)} - \ve)n^{v(F)}.
    \end{align}
    Note that Lemma~\ref{lem:hypergraph-reiher-2} implies \eqref{eq:using-hypergraph-reiher-2}, using $2^{s+1}\ve'' \leq \ve'$. Also, $\sum_{\phi}w_{\phi}\leq n^r$, and $n^{(v(F_{uv})-2)rs}w_{\phi}(\sum_v g_{\phi}(v))^s=t(F',H;\phi)$. The inequality \eqref{eq:fix-6} holds by Lemma~\ref{lem:labelled-sum}. Lastly, we obtain \eqref{eq:using-one-side} from Claim~\ref{clm:one-side}.
    This completes the proof.
\end{proof}


We now prove Theorem~\ref{thm:main-cograph}, as a corollary of Theorem~\ref{thm:main-graph}. For this, we need an alternative definition of cographs. 
It is now well-known that the following proposition of cographs is indeed an equivalent definition of a cograph (See~\cite{Graph-class-survey}).

\begin{fact}\label{fact:cograph}
    We say a graph $G$ is a cograph if there is a construction of $G$, starting from a single-vertex graph, using the following rules:
    \begin{enumerate}
        \item[$(a)$] A single-vertex graph is a cograph.
        \item[$(b)$] The disjoint union of two cographs is a cograph.
        \item[$(c)$] The join of two cographs is a cograph. 
    \end{enumerate}
\end{fact}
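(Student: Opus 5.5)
The plan is to prove the two inclusions separately. Let $\mathcal{C}$ be the class of graphs built by rules $(a)$–$(c)$; we show that $\mathcal{C}$ is exactly the class of induced $P_4$-free graphs. Two elementary facts will be used throughout:
\begin{itemize}
\item $P_4$ is self-complementary and connected, so both $P_4$ and $\overline{P_4}$ are connected.
\item The complement of a join of $A$ and $B$ is the disjoint union $\overline{A}\cup\overline{B}$.
\end{itemize}

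\emph{Every graph in $\mathcal{C}$ is induced $P_4$-free.} We induct along the construction; a single vertex is trivially $P_4$-free. Let $G=A\cup B$ be a disjoint union with $A,B$ $P_4$-free. An induced copy of $P_4$ in $G$ is connected, so its vertex set lies entirely in $A$ or entirely in $B$, which is impossible. Now let $G$ be the join of $A$ and $B$. An induced $P_4$ in $G$ is also an induced $P_4$ in $\overline{G}=\overline{A}\cup\overline{B}$. By the previous case it lies inside $A$ or inside $B$, and this again contradicts the hypothesis.

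\emph{Every induced $P_4$-free graph lies in $\mathcal{C}$.} The key step is the following lemma: if $G$ is $P_4$-free and $v(G)\ge 2$, then $G$ or $\overline{G}$ is disconnected. Granting it, we induct on $v(G)$. If $G$ is disconnected, it is a disjoint union of smaller graphs. If $\overline{G}$ is disconnected, $G$ is a join of smaller graphs. In both cases the smaller pieces are induced subgraphs of $G$, hence $P_4$-free, hence in $\mathcal{C}$ by induction. So $G\in\mathcal{C}$ by rule $(b)$ or $(c)$.

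The lemma is the main obstacle, and I would prove it by induction on $n=v(G)$. The case $n=2$ is trivial. Suppose for contradiction that both $G$ and $\overline{G}$ are connected, and pick any vertex $v$. The graph $G-v$ is $P_4$-free, so by induction $G-v$ or its complement is disconnected. Since $P_4$-freeness and the assumption on $G$ are invariant under complementation, we may replace $G$ by $\overline{G}$ and assume $G-v$ is disconnected.
\begin{itemize}
\item Because $G$ is connected, $v$ has a neighbour in every component of $G-v$.
\item Because $\overline{G}$ is connected, $v$ is not adjacent to all other vertices, so some component $C$ of $G-v$ contains a non-neighbour of $v$.
\item As $C$ is connected and contains both neighbours and non-neighbours of $v$, walking along a path in $C$ gives adjacent vertices $y,x\in C$ with $vy\in E(G)$ and $vx\notin E(G)$.
\item Let $z$ be a neighbour of $v$ in a component of $G-v$ other than $C$.
\end{itemize}
Then $x\,y\,v\,z$ is an induced $P_4$: the edges $xy$, $yv$, $vz$ are present, $xv$ is absent by choice, and $xz$, $yz$ are absent since these vertices lie in different components of $G-v$. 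This contradiction proves the lemma and completes the proof.
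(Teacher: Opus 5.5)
Your proof is correct and complete. The paper does not actually prove this Fact: it treats it as a well-known equivalent definition and refers to the graph-classes survey. So your argument is a self-contained replacement for that citation, not a variant of an argument in the text.

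You take the classical route. The easy direction follows because induced $P_4$-freeness is closed under disjoint union, since an induced $P_4$ is connected. It is also closed under join, by combining self-complementarity of $P_4$ with the fact that the complement of a join is the disjoint union of the complements. The substantive direction rests on Seinsche's lemma: a $P_4$-free graph on at least two vertices has $G$ or $\overline{G}$ disconnected. You prove it by deleting a vertex $v$, using $\overline{G-v}=\overline{G}-v$ and complementation symmetry to assume $G-v$ is disconnected, and exhibiting the induced path $x\,y\,v\,z$. The small checks all hold:
\begin{enumerate}
\item[(i)] $v$ has a neighbour in every component because $G$ is connected;
\item[(ii)] $v$ has a non-neighbour because $\overline{G}$ is connected and has at least two vertices;
\item[(iii)] the four vertices are distinct;
\item[(iv)] $xz,yz\notin E(G)$ because these vertices lie in different components of $G-v$.
\end{enumerate}

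Only the direction ``induced $P_4$-free $\Rightarrow$ constructible'' (your lemma plus heredity) is what the proof of Theorem~\ref{thm:main-cograph} uses. That proof needs every induced $P_4$-free graph on at least two vertices to split as a disjoint union or a join of two smaller induced $P_4$-free graphs; the converse direction plays no role there. The citation buys brevity, while your argument makes the cograph corollary fully self-contained.
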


We note that the \emph{join} of two vertex-disjoint graphs $G_1$ and $G_2$ is the graph obtained by taking their union and adding all edges between $V(G_1)$ and $V(G_2)$. With our definition, the join of $G_1$ and $G_2$ is $J(G_1, G_2; (uv))$, where $(uv)$ is a labelled $K_2$. 

\begin{proof}[Proof of Theorem~\ref{thm:main-cograph}]
    We use induction on the number of vertices. The base case, a single-vertex graph, is trivially $\knrs$. Let $G$ be a cograph with $v(G) \geq 2$. Observe that Fact~\ref{fact:cograph} implies either $G$ is a disjoint union of two cographs or a join of two cographs. Assume $G$ is a disjoint union of two cographs, say $F_1$ and $F_2$. Since $v(F_1), v(F_2) < v(G)$, by the induction hypothesis, both $F_1$ and $F_2$ are $\knrs$. The disjoint union of two $\knrs$ graphs remains $\knrs$, since $t(F_1\sqcup F_2,H)=t(F_1,H)t(F_2,H)$. Hence, in this case, $G$ is $\knrs$.
    Thus, we may assume that $G$ is a join of two cographs $F_1$ and $F_2$. Again, by the induction hypothesis, both $F_1$ and $F_2$ are $\knrs$. Then by Theorem~\ref{thm:main-graph}, we deduce that $J(F_1, F_2; (uv))$, which is $G$, is $\knrs$.
    This completes the proof.
\end{proof}


\section{Concluding remarks}\label{sec:concluding}

In this paper, we found a new class of $\knrs$ graphs, specifically, cographs. Our method considers an analogous notion of $\knrs$ in a hypergraph setting, and uses a hypergraph variant of Reiher's inequality. In this context, we extended Conjecture~\ref{conj:KNRS} to hypergraphs and posed Question~\ref{ques:hypergraph}.  

We note that for all $k \geq 3$, there is a $k$-graph that is not $\knrs$, namely a hypergraph with non-vanishing \emph{uniform Tur\'{a}n density}. For a $k$-graph $F$, its uniform Tur\'{a}n density is defined as the supremum over all $d\in[0,1]$ such that for every $\rho > 0$ and every positive integer $n_0$, there is a $(\rho, d)$-dense $k$-graph on at least $n_0$ vertices that does not contain an isomorphic copy of $F$.

It is known that complete graphs are $\knrs$~\cite{KNRS}. Hence, every sufficiently large locally dense graph contains a copy of any fixed graph, and every graph has vanishing uniform Tur\'{a}n density. However, for all $k\geq 3$, there is a $k$-graph whose uniform Tur\'{a}n density is non-zero, see~\cite{Erdos-Sos,Reiher-Rodl-Schacht}. In this context, Erd\H{o}s and S\'{o}s~\cite{Erdos-Sos} suggest determining the uniform Tur\'{a}n densities of hypergraphs, and it has now become one of the central topics in extremal combinatorics. Observe that from its definition, if a $k$-graph $F$ has a positive uniform Tur\'{a}n density, then $F$ is not $\knrs$. Thus, the natural intuition would be that this observation provides a complete answer to Question~\ref{ques:hypergraph}. However, somewhat surprisingly, the $3$-uniform loose triangle has vanishing uniform Tur\'{a}n density but is not $\knrs$. We include the proof below.  

\begin{proposition}\label{prop:loose-triangle}
    A $3$-uniform loose triangle is not $\knrs$.
\end{proposition}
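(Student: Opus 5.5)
The plan is to exhibit, for some fixed $d>0$, a family of $(o(1),d)$-dense $3$-graphs $H_n$ whose loose-triangle homomorphism density stays bounded below $d^{3}$. Label the loose triangle by the edges $\{a,b,x\}$, $\{b,c,y\}$, $\{c,a,z\}$. Each of $x,y,z$ lies in only one edge, so it can be summed out. For a pair $\{a,b\}$ define the codegree function
\[
P(a,b) \defeq \frac{1}{n}\sum_{x} H(\{a,b,x\}).
\]
Up to an $O(1/n)$ error from degenerate maps (controlled as in Lemma~\ref{lem:labelled-sum}), the loose-triangle density equals the triangle density of the weighted graph $P$:
\[
q(\text{loose triangle},H)=\frac{1}{n^{3}}\sum_{a,b,c} P(a,b)P(b,c)P(c,a)+o(1).
\]
Note that $P$ has average $d$. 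So the task reduces to this: find a locally dense $3$-graph whose codegree graph $P$ is \emph{triangle-deficient}, i.e.\ has triangle density at most $d^{3}-\delta$ for some fixed $\delta>0$.

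First, what will not work. Symmetric random constructions are useless here. This includes triples spanning a cyclic triangle of a random tournament, and any rule applied to independently coloured pairs. In such constructions either $P$ is essentially constant, or the values $P(a,b)$, $P(b,c)$, $P(c,a)$ are independent given the vertices, and in both cases the density comes out as exactly $d^{3}$. I checked this directly for a random $2$-colouring of pairs combined with a vertex ordering, where a triple $x<y<z$ is an edge iff $xy$ and $yz$ are both red: every gap-dependent term cancels and the density is exactly $1/64=d^{3}$.

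The construction must therefore make the codegree of a pair depend on a vertex-level structure that is anti-correlated around triangles, while local density is still witnessed by the third vertex. I would assign each vertex an independent uniform point $\theta_v$ of the circle $\mathbb{R}/\mathbb{Z}$, together with independent random colours on the pairs. The rule for $\{x,y,z\}$ would be: the triple is an edge when the pair colours agree with the cyclic order of $\theta_x,\theta_y,\theta_z$ in a prescribed way. The rule is to be designed so that $P(a,b)\approx g(\theta_b-\theta_a)$, where $g$ is large for "forward" displacements and small for "backward" ones. Around a triangle $a,b,c$ the three displacements sum to $0 \bmod 1$, so not all of them can be forward; this is the mechanism that pushes $\mathbb{E}[P(a,b)P(b,c)P(c,a)]$ below $d^{3}$.

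The proof then has three steps, in this order:
\begin{enumerate}
\item Verify $(\rho,d)$-density for every $\rho>0$ with high probability. For a set $U$, the edge count of $H[U]$ is determined by the empirical distribution of $\{\theta_u:u\in U\}$ and the random colours. Chernoff bounds and a union bound over all $U$ reduce this to a deterministic inequality: for every probability measure $\mu$ on the circle, the integral of the edge-probability kernel against $\mu^{\otimes 3}$ is at least $d$. The colours must be chosen so that this holds even when $\mu$ is concentrated near a single point. This is why some randomness has to live on the pairs rather than on the vertices.
\item Compute the limiting codegree kernel $g$ explicitly and evaluate $\int g(s)\,g(t)\,g(-s-t)\,ds\,dt$.
\item Show that this integral is strictly less than $\bigl(\int g\bigr)^{3}=d^{3}$.
\end{enumerate}

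The main obstacle is choosing a rule that satisfies steps 1 and 3 simultaneously. Step 1 needs the kernel to be dense on every measure, including point masses. Step 3 needs a genuine anti-correlation of $g$ around triangles, so the kernel cannot be too uniform. I would first try a rule in which the pair colours only decide which cyclic orientation of the triple is accepted. This keeps the density uniformly $1/4$ on every set. I would then check, by computing Fourier coefficients of $g$, that the cubic term $\sum_k \hat g(k)^{2}\hat g(-2k)$, or the appropriate analogue, is negative. If this rule gives exact cancellation again, I would vary it until a strictly negative correction appears.
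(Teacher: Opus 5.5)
\textbf{There is a genuine gap: the proposal never specifies a construction, and the route it sketches cannot produce one by the mechanism described.}

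Your reduction is correct. It is in fact what the paper's proof computes. Summing out the degree-one vertices gives $t(T,H)=n^{3}\sum_{a,b,c}P(a,b)P(b,c)P(c,a)$. The paper's bound $\operatorname{tr}((I+M)^3)/q^6$ is exactly the triangle density of the codegree graph of its blow-up. But the proposal stops where the proof has to start: step 3 is deferred to ``vary the rule until a strictly negative correction appears''. The problems with the chosen framework are structural.

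(i) Independent pair colours drop out entirely. The colour $c_{ab}$ enters only the factor $P(a,b)\approx g(\theta_a,\theta_b,c_{ab})$, so the codegree triangle density equals that of the colour-averaged kernel $\overline{g}(\theta,\theta')=\mathbb{E}_c\, g(\theta,\theta',c)$. By the concentration argument of your step 1, the density of every $H[U]$ is likewise governed by the colour-averaged $3$-kernel $\overline W$. So colours only let $\overline W$ take fractional values, which independent random triples would do equally well. Your claim that randomness must live on the pairs is also false: the paper's construction is purely vertex-type, and it handles concentrated sets by putting complete $3$-graphs inside the parts.

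(ii) $\overline{g}$ is symmetric, so a translation-invariant kernel $h(\theta'-\theta)$ has $h(s)=h(-s)$. There is no forward/backward distinction, and ``the displacements sum to $0$'' gives nothing. The triangle density is $\sum_k \widehat h(k)^3$, not $\sum_k\widehat h(k)^2\widehat h(-2k)$, so you need substantially negative nonzero Fourier coefficients.

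(iii) Your first candidate provably fails. If $\int \overline W\, d\mu^{\otimes 3}=c$ for every admissible $\mu$, polarizing this cubic form forces $\overline W\equiv c$ almost everywhere. Hence $P\equiv c$ and the density is exactly $c^3$. The cancellations you observed are therefore not accidents of the rules you tried.

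(iv) The target is $d^3$ with $d$ the infimum of local densities over all sub-measures, which can be far below $\int h$. It is not $(\int h)^3$.

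The missing idea is the kind of object that makes the codegree graph triangle-deficient while keeping local density. Triangles of the codegree graph arise from three pairs extended by three generally distinct hyperedges. To suppress them you want a linear $3$-graph whose shadow has no triangles other than the hyperedges themselves, that is, a linear loose-triangle-free $3$-graph. In a blow-up, the local density is at most $\alpha(H)^{-2}$, so you also need small independence number. This is a Ramsey-type object, and natural circle kernels do not supply it. For instance, a kernel supported near equilateral triples has codegree supported near $\pm 1/3$, which is full of triangles.

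The paper proceeds as follows:
\begin{enumerate}
\item Take Kostochka--Mubayi--Verstra\"{e}te's linear loose-triangle-free $3$-graphs $H$ on $q$ vertices with $a=\alpha(H)\le q^{2/3+o(1)}$.
\item Replace each vertex by a complete $3$-graph on $N$ vertices.
\item Verify $(\rho,d)$-density with $d=1/(9a^2)$, using $\alpha(H_N)\le 2a$ and averaging over $3a$-sets.
\item Bound the codegree triangle density by $(1+4a)/q^5$, using linearity, loose-triangle-freeness and $e(H)\le qa/6$.
\end{enumerate}
This bound is $o(d^3)$ precisely because $2/3<5/7$. Here $d$ is determined by the construction and tends to $0$. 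Fixing $d$ first is not ruled out, but you would have to exhibit a working kernel, and the proposal contains none.
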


\begin{proof}[Proof of Proposition~\ref{prop:loose-triangle}]
    Let $T$ be the $3$-uniform loose triangle. By a result of Kostochka, Mubayi and Verstra\"{e}te~\cite[Theorem~1.2]{Kostochka-Mubayi-Verstraete}, there is a sequence of linear $T$-free $3$-graphs $H$ with $q \defeq v(H) \to \infty$ and independence number $a \defeq \alpha(H) \leq q^{2/3+o(1)}$.

    For a positive integer $N$, replace each vertex $i$ of $H$ by a set $V_i$ of $N$ vertices. Let $H_N$ be the $3$-graph consisting of all triples within each $V_i$ and all triples with one vertex in each of $V_i,V_j,V_k$ whenever $\{i,j,k\}\in E(H)$. We write $n \defeq qN$ and $d \defeq 1/(9a^2)$.

    We first show that $H_N$ is $(\rho,d)$-dense for any $\rho>0$ when $N$ is sufficiently large. An independent set of $H_N$ contains at most two vertices from each $V_i$, and the set of parts it meets is independent in $H$. Hence, $\alpha(H_N)\leq 2a$. Every set of $3a$ vertices therefore spans at least $a$ edges, since otherwise deleting one vertex from each edge would leave an independent set of more than $2a$ vertices. For any $U\subseteq V(H_N)$ with $u \defeq |U|\geq 6a$, averaging over all $3a$-subsets of $U$ gives
    $$
        \frac{e(H_N[U])}{\binom{u}{3}} \geq \frac{a}{\binom{3a}{3}} \geq \frac{2}{9a^2}.
    $$
    Since $u(u-1)(u-2)\geq u^3/2$, it follows that $6e(H_N[U])\geq du^3$. Thus, $H_N$ is $(\rho,d)$-dense whenever $\rho qN\geq 6a$.

    We now bound $t(T,H_N)$. Let $S$ be the shadow graph of $H$, and let $M$ be its adjacency matrix. By linearity, $e(S)=3e(H)$. Moreover, every triangle of $S$ comes from an edge of $H$, since otherwise its three pairs extend to three distinct edges of $H$ forming a loose triangle. Hence, $S$ has exactly $e(H)$ triangles. Also, the neighbourhood of every vertex $v$ of $H$ is independent: an edge within this neighbourhood, together with two edges through $v$, would form a loose triangle. Consequently, $2\deg_H(v)\leq a$, and summing over $v$ gives $e(H)\leq qa/6$.

    For fixed parts $V_i$ and $V_j$ containing the two degree-two vertices of an edge of $T$, the number of possible parts for its degree-one vertex is at most $(I+M)_{ij}$. Summing first over the parts containing the three degree-one vertices, and allowing coincidences within edges when counting, we obtain
    \begin{align*}
        \frac{t(T,H_N)}{n^6} &\leq \frac{\operatorname{tr}((I+M)^3)}{q^6}
        \\ &= \frac{q+3\operatorname{tr}(M^2)+\operatorname{tr}(M^3)}{q^6}
        \\ &= \frac{q+24e(H)}{q^6} \leq \frac{1+4a}{q^5}.
    \end{align*}
    Therefore,
    $$
        \frac{t(T,H_N)}{d^3n^6} \leq \frac{9^3a^6(1+4a)}{q^5} \leq q^{-1/3+o(1)} \to 0.
    $$
    Choose $H$ sufficiently large that the last ratio is less than $1/2$, and then fix $H$ and $d$. For every $\rho>0$, taking $N$ sufficiently large gives a $(\rho,d)$-dense $3$-graph $H_N$ with $t(T,H_N)<d^3n^6/2$. Taking $\ve=d^3/4$ shows that $T$ is not $\knrs$. This completes the proof.
\end{proof}


\subsection*{Acknowledgement}
The author is grateful to Joonkyung Lee, Mathias Schacht, and Bjarne Sch\"{u}lke for their helpful discussions and comments over the past two years. He especially thanks Joonkyung Lee for valuable discussions on related topics and helpful comments on the introduction, and Mathias Schacht for suggesting the study of Conjecture~\ref{conj:KNRS}.

\subsection*{Statement of AI use}
The main results of this paper, namely Theorems~\ref{thm:main-cograph},~\ref{thm:main-graph}, and~\ref{thm:main-hypergraph}, were proved before November 2024. The first draft of this paper was completed in April 2026. ChatGPT Pro was used only to find a proof of Proposition~\ref{prop:loose-triangle} and to proofread the manuscript.

\vspace{-0.2cm}

\providecommand{\MR}[1]{}
\providecommand{\MRhref}[2]{%
  \href{http://www.ams.org/mathscinet-getitem?mr=#1}{#2}
}

    \bibliographystyle{amsplain_initials_nobysame}
    \bibliography{bibfile}


\appendix

\section{Appendix}\label{sec:appendix}

We provide the proof of Lemma~\ref{lem:hypergraph-reiher} here.
We adapt the proof method demonstrated in~\cite{Reiher}.

\begin{proof}[Proof of Lemma~\ref{lem:hypergraph-reiher}]
    We note that the space $\{\mathbf{x}\in [0, 1]^{V(H)}: \sum_{v\in V(H)} (\mathbf{x})_v \geq \rho n + 1\}$ is compact, so the following value $\Omega$ defined by
    \begin{equation}\label{eq:minimum}
        \Omega \defeq \min_{\substack{f:V(H)\to [0, 1],\\ \sum_{v\in V(H)} f(v) \geq \rho n + 1}} \left[ k! \sum_{e\in E(K_n^{(k)})} H(e) \prod_{u_i \in e} f(u_i) - d\left( \sum_{v\in V(H)} f(v) \right)^k \right]
    \end{equation}
    exists.
    
    Let $G(f)$ denote the expression in square brackets in \eqref{eq:minimum}.
    Among all the possible choices of $f$ where $G(f) = \Omega$, we choose $f$ that maximizes the size of the set $\{v\in V(H): f(v) \in \{0, 1\}\}$.

    \begin{claim}\label{clm:one-exception}
        There is at most one vertex $v\in V(H)$ such that $f(v) \notin \{0, 1\}$.
    \end{claim}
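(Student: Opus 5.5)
Suppose, for contradiction, that there are two distinct vertices $a,b$ with $f(a),f(b)\in(0,1)$. The plan is to move weight between $a$ and $b$ while keeping the total $\sum_v f(v)$ fixed. Such a move keeps $f$ feasible and leaves the term $d(\sum_v f(v))^k$ unchanged, so only the first term of $G$ changes.

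For real $t$, define $f_t$ by $f_t(a)=f(a)+t$, $f_t(b)=f(b)-t$, and $f_t(v)=f(v)$ for all other $v$. Then $\sum_v f_t(v)=\sum_v f(v)\ge \rho n+1$, and $f_t$ takes values in $[0,1]$ for all $t$ in a closed interval $[t_-,t_+]$ containing $0$ in its interior. At each endpoint, one of $f_t(a),f_t(b)$ lies in $\{0,1\}$.

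Now look at $\Phi(t)\defeq k!\sum_{e} H(e)\prod_{u_i\in e} f_t(u_i)$. Each edge $e$ contributes a product of coordinates of $f_t$, and each vertex appears at most once in $e$. So the contribution of $e$ is affine in $t$ unless $e$ contains both $a$ and $b$. In that case it is a multiple of $(f(a)+t)(f(b)-t)$, with a nonnegative coefficient $H(e)\prod_{u\in e\setminus\{a,b\}}f(u)$. Hence $\Phi(t)=\alpha+\beta t-\gamma t^2$ with $\gamma\ge 0$, so $\Phi$ is concave on $[t_-,t_+]$. Since $G(f_t)=\Phi(t)-d(\sum_v f(v))^k$, the function $t\mapsto G(f_t)$ is also concave on $[t_-,t_+]$, and it attains the minimum value $\Omega$ at the interior point $t=0$.

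A concave function attaining its minimum over an interval at an interior point must be constant on that interval. Therefore $G(f_{t_+})=\Omega$. But $f_{t_+}$ agrees with $f$ off $\{a,b\}$ and has one more coordinate in $\{0,1\}$ than $f$, while the other of $f_{t_+}(a),f_{t_+}(b)$ may or may not also lie in $\{0,1\}$. Either way, $f_{t_+}$ is a minimizer with strictly more coordinates in $\{0,1\}$, contradicting the choice of $f$.

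The main point to check is the concavity, namely that $a$ and $b$ enter each edge term at most linearly apart from the cross term $f(a)f(b)$, and that this cross term turns into $-t^2$ times a nonnegative coefficient. This holds because edges of $K_n^{(k)}$ are sets of distinct vertices and $H\ge 0$, $f\ge 0$. Everything else is bookkeeping.
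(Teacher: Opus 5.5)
Your proof is correct and follows the paper's argument: you shift weight between the two fractional vertices, which keeps $\sum_v f(v)$ fixed, and observe that $G$ then becomes a quadratic in the shift with nonpositive leading coefficient. The paper splits into cases ($A\neq 0$; $A=0$ with $B\neq C$; $A=0$ with $B=C$), whereas you handle them uniformly via the fact that a concave function attaining its minimum at an interior point is constant; this is a slightly cleaner route to the same contradiction with the maximal number of coordinates in $\{0,1\}$.
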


    \begin{claimproof}[Proof of Claim~\ref{clm:one-exception}]
        Assume there are two distinct vertices $u, v\in V(H)$ such that $0 < f(v) \leq f(u) < 1$. Let $\eta$ be the real number that we will choose later.

        Define a new function $\Tilde{f}: V(H) \to [0, 1]$ such that
        $$
            \Tilde{f}(z) \defeq 
            \begin{cases}
                f(z) + \eta &\text{if } z = u,\\
                f(z) - \eta &\text{if } z = v,\\
                f(z) &\text{otherwise.}
            \end{cases}
        $$

        Then there are four real numbers $A \geq 0$ and $B, C, D$ which do not depend on $\eta$ such that
        \begin{align*}
            G(\Tilde{f}) &= A(f(u)+\eta)(f(v) - \eta) + B(f(u) + \eta) + C(f(v) - \eta) + D\\
            &= G(f) + (A f(v) - A f(u) + B - C)\eta - A \eta^2.
        \end{align*}

        If $A \neq 0$, then there exists $\eta$ such that $G(\tilde{f}) < \Omega$, a contradiction.
        If $A = 0$ and $B\neq C$, then there also exists $\eta$ that leads to a contradiction.
        The only remaining case is $A = 0$ and $B = C$. In this case, for every admissible choice of $\eta$, we have $G(\tilde{f}) = \Omega$.
        Then we choose $\eta = \min \{1 - f(u), f(v)\}$. Then this contradicts the choice of $f$. This completes the proof. 
    \end{claimproof}

    Let $U = \{u\in V(H): f(u) =1\}$. Then by Claim~\ref{clm:one-exception}, we have 
    \begin{equation}\label{eq:U}
        |U| \geq \sum_{v\in V(H)} f(v) - 1 \geq \rho n. 
    \end{equation}
    By the definition of $(\rho, d)$-denseness, we have 
    \allowdisplaybreaks
    \begin{align}
        \Omega &\geq  k! \sum_{e\in E(K_n^{(k)})} H(e) \prod_{u_i \in e} f(u_i) - d\left( \sum_{v\in V(H)} f(v) \right)^k \nonumber \\
        &\geq k! \sum_{e\in E(K_n^{(k)}[U])} H(e)  - d\left( \sum_{v\in V(H)} f(v) \right)^k \nonumber \\
        &= t(K_k^{(k)}, H[U]) - d\left( \sum_{v\in V(H)} f(v) \right)^k \nonumber \\
        &\geq d |U|^k -  d\left( \sum_{v\in V(H)} f(v) \right)^k \label{eq:using-knrs} \\
        &\geq d|U|^k - d(|U| + 1)^k \label{eq:using-U} \\
        &\geq -2^k n^{k-1}. \nonumber
    \end{align}
    The inequality \eqref{eq:using-knrs} holds as $H$ is $(\rho, d)$-dense together with \eqref{eq:U}. Also, \eqref{eq:U} implies \eqref{eq:using-U} holds.
    This completes the proof.
\end{proof}


\end{document}